%*************************************************************
%*****    SEARCH GAMES
%*************************************************************

%!TEX TS-program =  pdflatex

%*************************************************************
%*****    DOCUMENT CLASS
%**********************************f***************************
\documentclass[reqno]{amsart}
%\documentclass[12pt]{article}

%*************************************************************
%*****    PACKAGES
%*************************************************************

%----------------------------------------------------------------------
%% Basic math input
%----------------------------------------------------------------------
\usepackage{amsmath}
\usepackage{amssymb}
\usepackage{amsfonts}
\usepackage{amsthm}
\usepackage[foot]{amsaddr}
\usepackage{mathtools}
\mathtoolsset{%
%showonlyrefs,	% to show only referenced equations
}
\usepackage{mathscinet}

%----------------------------------------------------------------------
%% Fonts and alphabets (beware of conflicts)
%----------------------------------------------------------------------
\usepackage[utf8]{inputenc}
\usepackage[T1]{fontenc}

%% Libertine
%----------------------------------------------------------------------
\usepackage{libertine}
\usepackage[libertine,libaltvw,cmintegrals,varbb]{newtxmath}

%% Times
%----------------------------------------------------------------------
%\usepackage{newtxtext}
%\usepackage[varbb,cmintegrals,cmbraces,bigdelims,noamssymbols,nosymbolsc]{newtxmath}
%\usepackage{txfonts}
%\let\mathbb=\varmathbb

%% Blackboard bold
%----------------------------------------------------------------------
\usepackage{dsfont}
%\let\mathbb=\mathds

%% Calligraphic font
%----------------------------------------------------------------------
\usepackage{mathrsfs}

%% Sans serif font
%----------------------------------------------------------------------
%\usepackage[lf,scaled=.92]{carlito}
%\usepackage[lf,scaled=.92]{sourcesanspro}

%% Typewriter font
%----------------------------------------------------------------------
%\usepackage{sourcecodepro}

%% Math alphabets
%----------------------------------------------------------------------
\usepackage[%
cal=cm,
%bb=fourier,
%scr=euler,
%frak=euler
]
{mathalfa}

%----------------------------------------------------------------------
%% Colors
%----------------------------------------------------------------------
\usepackage[dvipsnames,svgnames]{xcolor}
\colorlet{MyBlue}{DodgerBlue!60!Black}
\colorlet{MyGreen}{DarkGreen!85!Black}

%----------------------------------------------------------------------
%% Document layout
%----------------------------------------------------------------------

%\usepackage{titlesec}
%
%\newcommand{\afterhead}[1]{#1.}
%
%\titleformat{\section}[runin]{\bfseries}{\thesection.}{1ex}{\afterhead}
%\titlespacing{\section}{1em}{1.25em}{1em}

%----------------------------------------------------------------------
%% Figures and Graphics
%----------------------------------------------------------------------
\usepackage[font=small,labelfont=bf]{caption}
\captionsetup[algorithm]{labelfont={bf,sf,normalsize},font={small},labelsep=period}
\usepackage{subfigure}
\usepackage{tikz}
\usetikzlibrary{calc,patterns}

%----------------------------------------------------------------------
%% Miscellaneous
%----------------------------------------------------------------------
\usepackage{acronym}
\usepackage{booktabs}       % professional-quality tables
\usepackage{latexsym}
\usepackage{paralist}
\usepackage{wasysym}
\usepackage{xspace}

%----------------------------------------------------------------------
%% References
%----------------------------------------------------------------------
\usepackage[authoryear,compress,comma]{natbib}

%\def\BIBand{and}
%\def\newblock{\ }
%\bibpunct[, ]{[}{]}{,}{n}{}{,}

%\newcommand{\citeor}[2][]{\citeauthor{#2} \cite[#1]{#2}}
%\newcommand{\citeorp}[2][]{\textup(\citeor[#1]{#2}\textup)}

%----------------------------------------------------------------------
%% Hyperlinks
%----------------------------------------------------------------------
\usepackage{hyperref}
\hypersetup{
colorlinks=true,
linktocpage=true,
%pdfstartpage=1,
pdfstartview=FitH,
breaklinks=true,
pdfpagemode=UseNone,
pageanchor=true,
pdfpagemode=UseOutlines,
plainpages=false,
bookmarksnumbered,
bookmarksopen=false,
bookmarksopenlevel=1,
hypertexnames=true,
pdfhighlight=/O,
%hyperfootnotes=true,
%nesting=true,
%frenchlinks,
urlcolor=MyBlue!60!black,linkcolor=MyBlue!70!black,citecolor=DarkGreen!70!black, % <--- for screen
%urlcolor=black, linkcolor=black, citecolor=black, %pagecolor=black, % <--- for printing
%pagecolor=RoyalBlue,
pdftitle={},
pdfauthor={},
pdfsubject={},
pdfkeywords={},
pdfcreator={pdfLaTeX},
pdfproducer={LaTeX with hyperref}
}

% Hyperlink shortcuts
%\def\EMAIL#1{\email{\href{mailto:#1}{\texttt{\upshape #1}}}}
%\renewcommand{\EMAIL}[1]{\href{mailto:#1}{\texttt{#1}}}
%\def\URL#1{\href{#1}{#1}}

%----------------------------------------------------------------------
%% Cleverefs
%----------------------------------------------------------------------
\numberwithin{equation}{section}  %numberwithin goes before cleverefs when using hyperref
\usepackage[sort&compress,capitalize,nameinlink]{cleveref}
%\crefname{example}{Ex.}{Exs.}
\crefname{app}{Appendix}{Appendices}

\crefrangeformat{equation}{\upshape(#3#1#4)\textendash(#5#2#6)}

%*************************************************************
%*****    MACROS
%*************************************************************

%----------------------------------------------------------------------
%% Aliases
%----------------------------------------------------------------------

%----------------------------------------------------------------------
%% Boldface
%----------------------------------------------------------------------

%----------------------------------------------------------------------
%% Fields
%----------------------------------------------------------------------

\newcommand{\R}{\mathbb{R}}

\newcommand{\N}{\mathbb{N}}

%----------------------------------------------------------------------
%% Operators
%----------------------------------------------------------------------

\DeclareMathOperator{\ex}{\debug{\mathbb{E}}}

\DeclareMathOperator{\prob}{\debug{\mathbb{P}}}

%----------------------------------------------------------------------
%% Delimiters
%----------------------------------------------------------------------
 % provides an empty command for the delimiters below

\DeclarePairedDelimiter{\norm}{\lVert}{\rVert}
%\DeclarePairedDelimiter{\dnorm}{\lVert}{\rVert_{\ast}}

%\DeclarePairedDelimiterX{\braket}[2]{\langle}{\rangle}{#1\mathopen{}\hspace{1pt}\delimsize\vert\hspace{1pt}\mathopen{}#2}
\DeclarePairedDelimiterX{\braket}[2]{\langle}{\rangle}{#1,#2}

\DeclarePairedDelimiterX{\inner}[2]{\langle}{\rangle}{#1,#2}
\DeclarePairedDelimiterX{\setdef}[2]{\{}{\}}{#1:#2}

\DeclarePairedDelimiterXPP{\probof}[1]{\prob}{(}{)}{}{%

#1}

\DeclarePairedDelimiterXPP{\exof}[1]{\ex}{[}{]}{}{%

#1}

%----------------------------------------------------------------------
%% Formatting
%----------------------------------------------------------------------

%*************************************************************
%*****    EDITING
%*************************************************************
\usepackage[textwidth=30mm]{todonotes}

\newcommand{\debug}[1]{#1}

%\newcommand{\hilite}[1]{#1}

%*************************************************************
%*****    ENVIRONMENTS
%*************************************************************

%----------------------------------------------------------------------
%% Algorithms
%----------------------------------------------------------------------
%\usepackage{algorithm2e_extension}
%%\usepackage{algorithm2e}
%\usepackage{algorithm}
%\SetKwBlock{Repeat}{Repeat}{}
%%\SetKwProg{SimulForEach}{for each}{\;do simultaneously}{}
%\newcommand{\kwd}[1]{\textsf{\bfseries#1}}

%----------------------------------------------------------------------
%% Theorem-like
%----------------------------------------------------------------------
\theoremstyle{plain}
\newtheorem{theorem}{Theorem}
\newtheorem{corollary}[theorem]{Corollary}
\newtheorem*{corollary*}{Corollary}
\newtheorem{lemma}[theorem]{Lemma}
\newtheorem{proposition}[theorem]{Proposition}

%----------------------------------------------------------------------
%% Definition-like
%----------------------------------------------------------------------
\theoremstyle{definition}
\newtheorem{definition}[theorem]{Definition}
\newtheorem*{definition*}{Definition}

\newtheorem*{hypothesis*}{Hypothesis}

%----------------------------------------------------------------------
%% Footnotes
%----------------------------------------------------------------------

%\newcommand{\footnoteOR}[1]{\xspace[#1]}

%----------------------------------------------------------------------
%% Proofs
%----------------------------------------------------------------------

%\newcommand{\qedsymbol}{$\halmos$}
%\renewcommand{\qed}{\footnotesize$\blacksquare$}
%\newenvironment{Proof}[1][Proof]{\proof{#1\afterhead}}{\endproof\vspace{1ex}}

%----------------------------------------------------------------------
%% Remark-like
%----------------------------------------------------------------------
\theoremstyle{remark}
\newtheorem{remark}{Remark}
\newtheorem*{remark*}{Remark}
\newtheorem*{notation*}{Notational remark}
\newtheorem{example}{Example}

%----------------------------------------------------------------------
%% Numbering
%----------------------------------------------------------------------
\numberwithin{theorem}{section}
\numberwithin{remark}{section}
\numberwithin{example}{section}

\title
[Continuous Patrolling and Hiding Games]
{Continuous Patrolling and Hiding Games}

\author
[T.~Garrec]
{Tristan Garrec}
\address{TSE-R, Toulouse School of Economics,  Manufacture des Tabacs, 21 Allée de Brienne, 31015 Toulouse Cedex 6, France }
\email{tristan.garrec@ut-capitole.fr}
%\urladdr{\url{http://www.here.com}}

%\input{Thanks}

%----------------------------------------------------------------------
%%% KEYWORDS
%----------------------------------------------------------------------
%\subjclass[2010]{Primary 91A24; secondary  91A05, 91A15, 91A25.}
\keywords{%
Game theory: Noncooperative;
Military: Search/surveillance;
Networks}

\newcommand{\A}{\mathcal{A}}
\newcommand{\W}{\mathcal{W}}

\newcommand{\n}{\mathcal{N}}

\newcommand{\Proba}{\mathbb{P}}
\newcommand\eucl[1]{\left\lVert#1\right\rVert_2}

\begin{document}
\maketitle

\begin{abstract}
We present two zero-sum games modeling situations where one player attacks (or hides in) a finite dimensional nonempty compact set, and the other tries to prevent the attack (or find him). The first game, called patrolling game, corresponds to a dynamic formulation of this situation in the sense that the attacker chooses a time and a point to attack and the patroller chooses a continuous trajectory to maximize the probability of finding the attack point in a given time. Whereas the second game, called hiding game, corresponds to a static formulation in which both the searcher and the hider choose simultaneously a point and the searcher maximizes the probability of being at distance less than a given threshold of the hider.
\end{abstract}

\section{Introduction}

To ensure the security of vulnerable facilities, a planner may deploy either dynamic or static security devices. Some examples of dynamic security devices are: soldiers or police officers patrolling the streets of a city, robots patrolling a shopping mall, drones flying above a forest to detect fires, or naval radar systems signaling the detection of an enemy ship. On the other hand, security guards positioned in the rooms of a museum, security cameras scrutinizing subway corridors, or motion detectors placed in a house are some examples of static security devices.
%Note that these real-world situations include both human and electronic agents.
The paradigm we adopt is the one of an adversarial threat, hence we propose a game theoretical approach to these security problems. Some game theoretic security systems are already in use, for example in the Los Angeles international airport, see \citep{pitaetal2008}, and in some ports of the United States, see \citep{shiehetal2012}.

Motivated by the examples given above, we study two zero-sum games in which a player (the patroller or searcher) aims to detect another player (the attacker or hider). \textit{Patrolling games} model dynamic security devices. In these games, the patroller moves continuously in a search space with bounded speed. The attacker chooses a point in the search space and a time to attack it. The attack takes a certain duration to be successful (think of a terrorist needing time to set off a bomb). The patroller wins if and only if she detects the attack before it succeeds. The case of static security devices is modeled by \textit{hiding games}, in which both the searcher and the hider simultaneously deploy at some points in a search space, the searcher wins if and only if the hider lies within her detection radius. We provide links between patrolling and hiding games, and show how patrolling games reduce to hiding games if the attack duration is zero, that is the patroller has to detect the attack at the exact time it occurs, or if the patroller is alerted of the attack point when the attack begins.

\subsection{Contribution}

For patrolling games, we prove that the value always exists, and obtain a decomposition result to lower bound the value as well as a general upper bound. We then study patrolling games on networks. We compute the value as well as optimal strategies for the class of Eulerian networks. The special network composed of two nodes linked by three parallel arcs is also examined and bounds on the value are computed. Lastly, we study patrolling games on $\R^2$ and obtain an asymptotic expression for the value as the detection radius of the patroller goes to $0$.

For hiding games, we first focus on a particular class of strategies for both the searcher and the hider called "equalizing". These strategies have the property that if there exists one, then it is optimal for both players. Our main result regarding hiding games is an asymptotic formula for the value when the search space has positive Lebesgue measure. A counterexample based on a Cantor-type set showing that this last result cannot be extended to compact sets with zero Lebesgue measure is also presented.

Finally, we discuss some elementary properties of monotonicity and continuity of the value function of continuous patrolling and hiding games.

\subsection{Related literature}
Continuous patrolling and hiding games belong to the literature on search and security games, consult \citep{Hoh:JORSJ2016} for a survey. These games have their source in search theory, a field of operations research whose origins can be found in the works of \cite{koopman1956a,koopman1956b,koopman1957}. The use of game theory in a search and security context goes back to the famous book of \cite{morsekimball1951}. %pollock, danskin, guenin, dobbie, eagle

\subsubsection{Patrolling games}
Patrolling games were introduced by \cite{alpernetal2011} in a discrete setting, that is the patroller visits nodes of a graph, where the attacker can strike, at discrete times. A companion article \citep{alpernetal2016b} is dedicated to the resolution of patrolling a discrete line. The idea of investigating a continuous version of patrolling games is suggested in \citep{alpernetal2011}. In \citep{alpernetal2016a}, the authors solve the continuous patrolling game played on the unit interval. Several papers in the field of search games have dealt with the transcription of discrete models to continuous ones, as in \citep{rucklekikuta2000} and \citep{ruckle1981}.

Other game theoretic models involving a patroller and an attacker can be found in \citep{basilicoetal2012,basilicoetal2015}, in which the authors design algorithms to solve large instances of Stackelberg patrolling security games on graphs. \cite{linetal2013,linetal2014} use linear programming and heuristics to study a large class of patrolling problems on graphs, with nodes having different values. \cite{zoroaetal2012} study a patrolling game with a mobile attacker on a perimeter. 

Continuous patrolling games are closely related to search games with an immobile hider, introduced in the seminal book of \cite{Isa:Wiley1996} and developed in the monographs of \cite{Gal:AP1980} and \cite{AlpGal:Kluwer2003}. In these games, a searcher intends to minimize the time necessary to find a hider. Search games have been extensively studied, let us mention \citep{Gal:SJCO1979,AlpBasGal:IJGT2008,DagGal:N2008} for search games on a network. See also \citep{Bos:SIAMJADM1984,pavlovic1993} for the special network consisting of two nodes linked by three parallel arcs, for which the solution is surprisingly complicated.

\subsubsection{Hiding games}

The first published example of a hiding game goes back to \cite{galeglassey1974}. The proposers gave a solution to the problem of hiding in the unit disc when the detection radius is $r=1/2$. Later, \cite{ruckle1983} considered in his book several examples of hiding games (hiding on a sphere, hiding in a disc, among others). Computing the value of hiding games is in general a very difficult task. \cite{danskin1990} improved substantially the resolution of the hiding game played on a disc, he called the cookie-cutter game. However the solution is not complete for small values of $r$ and no progress has been made since then, see also \citep{alpernetal2013} and \citep{washburn2014}. %Although we do not specifically study the game of hiding in a disc, our asymptotic result on the value of hiding in a compact set with positive Lebesgue measure may be applied here to give an approximation of the value for small values of $r$.
Hiding games in a discrete setting, i.e., when the search space is a graph, have been studied by \cite{bishopevans2013}.

Games in which the payoff is the distance between the searcher and the hider, introduced by \cite{karlin1959}, have been extensively studied, consult \citep{ibragimovsatimov2012} and references therein. Although these games resemble hiding games to a certain extent, the lack of continuity of the payoff function in hiding games makes their analysis much more involved.

Finally, ambush games can be seen as hiding games in which the players select not one, but several points in the unit interval, consult \citep{zoroaetal1999,bastonkikuta2004,bastonkikuta2009} for further details.

\subsection{Organization of the paper}

The paper is organized as follows. In \cref{sec_models} the models are formally presented. \cref{sec_patrol} is dedicated to patrolling games, and \cref{sec_hiding} is dedicated to hiding games. Finally in \cref{sec_propertiesvalue} we give some properties of the value function of continuous patrolling and hiding games. Proofs that are not included in the body of the paper are postponed to the \hyperlink{app:appendix_patrol}{Appendix of Chapter 4}.

\subsection{Notations}

In all the article, $\R^n$ is endowed with a norm denoted $\Vert \cdot \Vert$, which induces a metric $d$. For all $x\in\R^n$ and $r>0$, the closed ball of center $x$ with radius $r$ is denoted $B_r(x) = \{y\in \R^n \ | \ \Vert x - y \Vert \leq r \}$. For all Lebesgue measurable set $B\subset\R^n$, $\lambda(B)$ denotes its Lebesgue measure. Finally, $\lambda(B_r)$ denotes the Lebesgue measure of any ball of radius $r$. Let $X$ be a topological space, the set of Borel probability measures on $X$ is denoted $\Delta(X)$, and the set of probability measures on $X$ with finite support is denoted $\Delta_f(X)$.

\section{The models}
\label{sec_models}
\subsection{Patrolling games}

In a patrolling game two players, an attacker and a patroller, act on a set $Q$ called the search space, which is assumed to be a nonempty compact subset of $\R^n$. An example of this could be a metric network, as in \cref{subsec_network}. The attacker chooses an attack point $y$ in $Q$ and a time to attack $t$ in $\R_+$. The patroller walks continuously in $Q$ with speed at most $1$. When the attack occurs at time $t$ and point $y$, the patroller has a time limit $m\in\R_+$ to be at distance at most $r\in\R_+$ of the attack point $y$. In this case she detects the attack and wins, and otherwise she does not. Thus, $m$ represents the time needed for an attack to be successful, and $r$ represents the detection radius of the patroller.

A patrolling game is thus a zero-sum game given by a triplet $(Q,m,r)$. The attacker's set of pure strategies is $\A = Q\times\R_+$. An element of $\A$ is called an attack. The patroller's set of pure strategies is \[\W = \{w:\R_+\to Q \ | \ w \text{ is } 1\text{-Lipschitz continuous}\}.\] An element of $\W$ is called a walk. $\W$ is endowed with the topology of compact convergence (consult the proof of \cref{prop_existence_value} in the \hyperlink{app:appendix_patrol}{Appendix of Chapter 4} and \citep{munkres2000} for details).

The payoff to the patroller is given by 
\[g_{m,r}(w,(y,t))=\left\{
    \begin{array}{ll}
1 &\text{ if } d(y,w([t,t+m]))\leq r \\
0 &\text{ otherwise},
    \end{array}
\right.
\]
where $w([t,t+m])=\{w(\tau) \ | \ \tau\in [t,t+m]\}.$

\subsection{Hiding games}

In a hiding game two players, a searcher and a hider, act on a search space $Q$, which is again assumed to be a nonempty compact subset of $\R^n$. Both players choose a point in $Q$. The searcher has a detection radius $r\in\R_+$. She finds the hider if and only if the two points are at distance at most $r$.

Hence, a hiding game is a zero-sum game given by a couple $H=(Q,r)$. The set of pure strategies of both players, the searcher and hider, is $Q$. The payoff to the searcher is given by 
\[h_r(x,y)=\left\{
    \begin{array}{ll}
1 &\text{ if } \Vert x-y\Vert \leq r \\
0 &\text{ otherwise}.
    \end{array}
\right.\]

\subsection{Links between patrolling and hiding games}

Hiding games can be interpreted as two possible variants of patrolling games.

In the first variant, hiding games are considered as a particular class of patrolling games in which the attack duration $m$ is equal to $0$. Indeed, consider a hiding game $H=(Q,r)$ and a patrolling game $P=(Q,0,r)$. In $P$, for all $w\in\W$ and $(y,t)\in \A$ the payoff to the patroller is
\[g_{0,r}(w,(y,t))=\left\{
    \begin{array}{ll}
1 &\text{ if } \Vert w(t) - y \Vert \leq r \\
0 &\text{ otherwise}.
    \end{array}
\right.\]
Let $x\in Q$ be a strategy of the searcher in the hiding game $H$, and consider the constant walk $w_x : t\mapsto x$ of the patroller in $P$. Then for any strategy $(y,t)$ of the attacker in $P$, the payoff $g_{0,r}(w_x,(y,t))$ equals $1$ if $\Vert x - y \Vert \leq r$ and $0$ otherwise. Since the payoff is time-independent, any quantity guaranteed by the searcher in $H$ is guaranteed by the patroller in $P$. Conversely, let $y\in Q$ be a strategy  of the hider in $H$ and consider the strategy $(y,0)$ of the attacker in $H$. Then for any strategy $w$ of the patroller in $P$, the payoff is $1$ if $\Vert w(0) - y \Vert \leq r$ and $0$ otherwise. Again the payoff is time-independent and any quantity guaranteed by the hider in $H$ is guaranteed by the attacker in $P$. Thus, since $H$ and $P$ have a value (see \cref{prop_existence_value,prop_existence_value_hide}), the values of these two games are the same.

The second interpretation is as follows. \cite{alpernetal2011} suggest the study of patrolling games in which the patroller may be informed of the presence of the attacker.
Suppose that the patroller is informed of the attack point when the attack occurs. Suppose also the search space $Q$ is convex. The detection radius $r$ is set to $0$ for simplicity. The payoff of this game is
\[g_{m,0}(w,(y,t))=\left\{
    \begin{array}{ll}
1 &\text{ if } y\in w([t,t+m])\\
0 &\text{ otherwise}.
    \end{array}
\right.\]
This patrolling game with signals is denoted $P'$. In $P'$, if the patroller's strategy is to choose a point and not move until the attack, then go to the attack point in straight line when she is alerted, the attacker is time-indifferent. In particular, the attacker has a best response in the set of attacks occurring at time $0$. Symmetrically, if the attack occurs at time $0$, the patroller has a best response consisting in choosing a starting point in $Q$ and going directly to the attack point when she is informed of the attack.
Thus, with the same mappings of strategies in the hiding game $H'=(Q,m)$ to strategies in $P'$ as before, any quantity guaranteed by the searcher in $H'$ is guaranteed by the patroller in $P'$. Conversely, any quantity guaranteed by the hider in $H'$ is guaranteed by the attacker in $P'$. Thus the values of these two games are the same. 

\section{Patrolling games}
\label{sec_patrol}
\subsection{The value of patrolling games}
\label{subsec_valuepatrol}

Our first result is the existence of the value of patrolling games. We denote it $V_Q(m,r)$. In addition, we prove that the patroller has an optimal strategy and the attacker has an $\varepsilon$-optimal strategy with finite support. The fact that the patroller has an optimal strategy means that she can guarantee that the probability of detecting the attack is at least $V_Q(m,r)$, no matter what the attacker does. Similarly, the attacker can guarantee that the probability of being caught is at most $V_Q(m,r)$, up to $\varepsilon$, no matter what the patroller does. Hence, in patrolling games, the value represents the probability (up to $\varepsilon$) of the attack being intercepted when both the patroller and the attacker play ($\varepsilon$-)optimally.

\begin{proposition}
\label{prop_existence_value}
The patrolling game $(Q,m,r)$ played with mixed strategies has a value $V_Q(m,r)$.

Moreover the patroller has an optimal strategy and the attacker has an $\varepsilon$-optimal strategy with finite support, i.e.,
\begin{align*}
V_Q(m,r) &= \max_{\mu\in\Delta(\W)}\inf_{(y,t)\in \A} \int_\W g_{m,r}(w,(y,t))d\mu(w)\\
&= \inf_{\nu\in\Delta_f(\A)}\max_{w\in \W} \int_\A g_{m,r}(w,(y,t))d\nu(y,t).
\end{align*}
\end{proposition}
The proof of \cref{prop_existence_value} is provided in the \hyperlink{app:appendix_patrol}{Appendix of Chapter 4}.

\subsection{Decomposition}

As it is the case for graphs, see \citep{alpernetal2011}, it is possible to consider a search space $Q$ as the union of simpler search spaces $Q_1,\dots,Q_n$ for which the values of the corresponding patrolling games may be known. The value of the original patrolling game is lower bounded by a function of the values of the patrolling games involved in the decomposition.

\begin{proposition}
\label{prop_decomposition}
Let $Q$ and $Q_1,\dots,Q_n$ be search spaces such that $Q = \cup_{i=1}^n Q_i$. Then for all $m,r\in \R_+$
\[V_Q(m,r)\geq \frac{1}{\sum_{i=1}^n V_{Q_i}(m,r)^{-1}}.\]
\end{proposition}

\begin{proof}
For all $i\in\{1,\dots,n\}$, let $\mu_i$ be an optimal strategy of the patroller in the game $(Q_i,m,r)$. Let $\mu$ be the strategy of the patroller in the game $(Q,m,r)$ which consists in playing strategy $\mu_i$ with probability $\frac{V_{Q_i}(m,r)^{-1}}{\sum_{k=1}^n V_{Q_k}(m,r)^{-1}}$.

Let $(y,t)\in\A = Q\times \R_+$ be an attack in the game $(Q,m,r)$. Then there exists $i\in\{1,\dots,n\}$ such that $y\in Q_i$, hence $g(\sigma,(y,t))\geq \frac{V_{Q_i}(m,r)^{-1}}{\sum_{k=1}^n V_{Q_k}(m,r)^{-1}} V(Q_i,m,r).$
\end{proof}

\subsection{A general upper bound}

Our goal is now to obtain a general upper bound for the value of patrolling games. As in \citep{AlpGal:Kluwer2003}, let us introduce the maximal rate at which the patroller can discover new points of $Q$.

\begin{definition}
\label{def_maxdisco}
The maximal discovery rate is given by
\[\rho = \sup_{w\in\W, t>0}\frac{\lambda(w([0,t])+B_r(0))-\lambda(B_r)}{t},\]
where $w([0,t]) = \{w(\tau) \ | \ \tau\in [0,t]\}$ is the image of $[0,t]$ by $w$ and $w([0,t])+B_r(0) = \{y\in \R^n \ | \ d(w([0,t],y))\leq r\}$.
\end{definition}
Hence, in a network the maximal discovery rate $\rho$ is $1$, in $\R^2$ and $\R^3$ endowed with the Euclidean norm, under suitable assumptions, the maximal discovery rate is typically $2r$ and $\pi r^2$ respectively, that is the sweep width of the patroller.

Let us now give a general upper bound on the value of patrolling games whose search space have nonzero Lebesgue measure. It is the upper bound used to prove \cref{th_euleriannetwork,thm_simplesearchspace,theorem_equivalent_static}. A similar bound is given in \citep{alpernetal2011} in the discrete case.

\begin{proposition}
\label{proposition_upperbound}
Let $Q$ be a search space such that $\lambda(Q)>0$. Then \[V_Q(m,r) \leq \frac{m\rho+\lambda(B_r)}{\lambda(Q)}.\]
\end{proposition}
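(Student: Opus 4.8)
The plan is to exhibit an explicit $\varepsilon$-optimal attacker strategy $\nu\in\Delta_f(\A)$ whose payoff against every walk is bounded by the claimed quantity; by Proposition \ref{prop_existence_value} this suffices to bound the value from above. The natural candidate is the \emph{uniform attack}: the attacker picks a point $y$ distributed according to the normalized Lebesgue measure on $Q$ (i.e.\ with density $1/\lambda(Q)$) and attacks at a fixed time, say $t=0$. Since $\Delta_f(\A)$ rather than $\Delta(\A)$ appears in Proposition \ref{prop_existence_value}, I would either work directly with the continuous uniform measure and appeal to a standard approximation argument (approximating the upper value over $\Delta(\A)$ by finitely supported measures), or simply note that the infimum defining the value over $\Delta_f(\A)$ is bounded above by the payoff of any Borel measure, which is what we compute.

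The key computation is as follows. Fix any walk $w\in\W$ and let $\mu_Q$ denote the uniform probability measure on $Q$. Against $w$ with attack time $0$, the expected payoff is
\[
\int_Q g_{m,r}(w,(y,0))\,d\mu_Q(y)
=\mu_Q\bigl(\{y\in Q : d(y,w([0,m]))\leq r\}\bigr)
=\frac{\lambda\bigl(Q\cap N_r(w([0,m]))\bigr)}{\lambda(Q)},
\]
where $N_r(S)=\{y : d(y,S)\leq r\}=S+B_r(0)$ is the closed $r$-neighborhood of $S$. I would then bound the numerator by dropping the intersection with $Q$, giving $\lambda\bigl(w([0,m])+B_r(0)\bigr)$ in the numerator. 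The final step is to control this neighborhood volume using the maximum discovery rate: by Definition \ref{def_maxdisco}, applied with $t=m$, we have $\lambda(w([0,m])+B_r(0))-\lambda(B_r)\leq \rho m$, so that $\lambda(w([0,m])+B_r(0))\leq m\rho+\lambda(B_r)$. Combining these inequalities yields
\[
\int_Q g_{m,r}(w,(y,0))\,d\mu_Q(y)\leq\frac{m\rho+\lambda(B_r)}{\lambda(Q)},
\]
and since this holds for every $w\in\W$, the uniform attack guarantees the attacker a payoff at most the right-hand side, whence $V_Q(m,r)\leq(m\rho+\lambda(B_r))/\lambda(Q)$.

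The only genuinely delicate points are bookkeeping rather than conceptual. First, one must ensure the set $\{y\in Q : d(y,w([0,m]))\leq r\}$ is measurable; this is immediate since $w([0,m])$ is compact (continuous image of a compact interval) and the distance-to-a-compact-set function is continuous, so the set is closed. Second, I would reconcile the continuous uniform measure with the finite-support requirement in Proposition \ref{prop_existence_value}: the cleanest route is to observe that the value equals $\inf_{\nu\in\Delta_f(\A)}\sup_w\int g\,d\nu$, and that since $\mu_Q$ is a Borel measure with $\sup_w\int g\,d\mu_Q\le(m\rho+\lambda(B_r))/\lambda(Q)$, a standard weak-$*$ approximation of $\mu_Q$ by finitely supported measures (or a direct minimax/duality argument as in Proposition \ref{prop_existence_value}) transfers the same bound to the infimum over $\Delta_f(\A)$. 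I expect this measurability-and-approximation housekeeping to be the main obstacle, as the volumetric inequality itself is an immediate consequence of Definition \ref{def_maxdisco}.
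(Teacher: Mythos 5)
Your proposal is correct and follows essentially the same route as the paper: the paper also plays the uniform attack $a_\lambda$ at time $0$ and bounds the payoff of any walk $w$ by $\lambda\bigl(w([0,m])+B_r(0)\bigr)/\lambda(Q)\leq(m\rho+\lambda(B_r))/\lambda(Q)$ via Definition \ref{def_maxdisco}. Your additional remarks on measurability and on reconciling the continuous uniform measure with the guarantee structure of Proposition \ref{prop_existence_value} are sound housekeeping that the paper leaves implicit (and you are in fact slightly more careful than the paper in keeping the intersection with $Q$ before discarding it).
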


To prove \cref{proposition_upperbound}, we define a strategy for the attacker called the uniform strategy, under which he attacks uniformly over $Q$ at time $0$. Intuitively, a best response of the patroller is to cover as much points in $Q$ as possible between time $0$ and time $m$.

\begin{definition}
\label{def_attackunif}
Let $Q$ be a search space such that $\lambda(Q)>0$. The attacker's uniform strategy on $Q$, denoted $a_\lambda$, is a random choice of the attack point at time $0$ such that for all measurable sets $B\subset Q$,
\[a_\lambda(B,0) = \frac{\lambda(B)}{\lambda(Q)}, \text{ and } a_\lambda(B,t) = 0 \text{ if } t>0.\]
\end{definition}

\begin{proof}[Proof of \cref{proposition_upperbound}]
For all $w\in\W$, the payoff to the patroller when the attacker plays $a_\lambda$ is
\begin{align*}
\int_\A g_{m,r}(w,(y,t))da_\lambda(y,t) &= \frac{\lambda\big(w([0,m])+B_r(0)\big)}{\lambda(Q)}\\ &\leq \frac{m\rho + \lambda(B_r)}{\lambda(Q)}.
\end{align*}
\end{proof}

\subsection{Patrolling a network}
\label{subsec_network}

%We now investigate the case of patrolling games on networks.
% As in a network the patroller can manage to have a nonzero probability of walking through the exact attack point, his detection radius $r$ is set to $0$.

\subsubsection{Definition of a network}
\label{subsubsec_definitionnetwork}

We follow the construction of a network of \cite{fournier2016}. Let $(V,E,l)$ be a weighted undirected graph, $V$ is the finite set of nodes and $E$ the finite set of edges whose elements $e\in E$ have length $l(e)\in\R_+$. An edge $e\in E$ linking the two nodes $u$ and $v$ is also denoted $(u,v)$.

We identify the elements of $V$ with the vectors of the canonical basis of $\R^{|V|}$. The network generated by $(V,E)$ is the set of points
\[\n=\{(u,v,\alpha) \ | \ \alpha\in [0,1] \text{ and } (u,v)\in E\},\] where $(u,v,\alpha) = \alpha u + (1-\alpha)v$.

We denote $P(u,v)$ the set of paths between two points $u$ and $v$ in $\n$. It is the set of all sequences $(u_1,\dots, u_n)$, $n\in \N^\ast$ such that $u_1 = u$, $u_n = v$ and such that for all $i \in \{1,\dots,n-1\}$, $u_i$ and $u_{i+1}$ belong to the same edge. Let $u_1=(u,v,\alpha_1)$ and $u_2=(u,v,\alpha_2)$, and suppose $\alpha_1<\alpha_2$. The set $[u_1,u_2]=\{(u,v,\alpha)\ | \ \alpha\in [\alpha_1,\alpha_2]\}$ is called an interval.

Finally, networks are endowed with their natural metric $d$, i.e., the length of the shortest path between two points, as well as their Lebesgue measure defined as a natural extension of the Lebesgue measure on real intervals.

\subsubsection{Eulerian networks}
\label{subsubsec_eulerian}

For Eulerian networks, it is possible to compute the value and optimal strategies of the game. As stated in the next definition, an Eulerian tour is a closed path in $\n$ visiting all points and having length $\lambda(\n)$. 

\begin{definition}
Let $u\in\n$ and $\pi = (u_1,u_2,\dots,u_{n-1},u_n)\in P(u,u)$. If \[\bigcup_{k=1}^{n-1}[u_k,u_{k+1}] = \n,\] then $\pi$ is called a tour. Moreover, if $\sum_{k=1}^{n-1}\lambda([u_k,u_{k+1}]) = \lambda(\n),$ then $\pi$ is called an Eulerian tour. A network $\n$ is said to be Eulerian if it admits an Eulerian tour.
\end{definition}

\begin{example}
\cref{network_1,network_2} display two examples of networks. $\n_1$ is an Eulerian network with Eulerian tour $\pi_1 = (u_1,u_2,u_3,u_4,u_5,u_6,u_3,u_7,u_1)$. In contrast, $\n_2$ is not an Eulerian network.
\begin{figure}[!ht]
\centering
  \begin{minipage}[b]{0.4\textwidth}
    \begin{tikzpicture}[scale=0.55,rotate=90]
\draw[-] (0,2) to (-2,0);
\fill (0,2) circle (3pt) node[above] {$u_1$};
\fill (-2,0) circle (3pt) node[left] {$u_2$};
\draw[-] (0,2) to (2,0);
\fill (2,0) circle (3pt) node[right] {$u_7$};
\draw[-] (-2,0) to (2,-4);
\fill (2,-4) circle (3pt) node[right] {$u_4$};
\draw[-] (2,0) to (-2,-4);
\fill (-2,-4) circle (3pt) node[left] {$u_6$};
\draw[-] (2,-4) to (0,-6);
\fill (0,-6) circle (3pt) node[below] {$u_5$};
\draw[-] (-2,-4) to (0,-6);
\fill (0,-2) circle (3pt) node[right] {$u_3$};
\end{tikzpicture}
    \caption{The network $\n_1$ is Eulerian}
    \label{network_1}
  \end{minipage}
  \hfill
  \begin{minipage}[b]{0.4\textwidth}
    \begin{tikzpicture}[scale=0.55,rotate=90]
\draw[-] (0,4) to (-2,0);
\draw[-] (0,4) to (2,0);
\draw[-] (-2,0) to (0,-4);
\draw[-] (2,0) to (0,-4);
\draw[-] (0,4) to (0,-4);
\fill (0,4) circle (3pt) node[above] {$u_1$};
\fill (-2,0) circle (3pt) node[left] {$u_2$};
\fill (2,0) circle (3pt) node[right] {$u_4$};
\fill (0,-4) circle (3pt) node[below] {$u_3$};
\end{tikzpicture}
    \caption{The network $\n_2$ is not Eulerian}
    \label{network_2}
  \end{minipage}
\end{figure}
\end{example}

Our objective is now to define the uniform strategy of the patroller for Eulerian networks. This strategy is optimal for Eulerian networks. First, we need to define a parametrization of the network.

\begin{definition}
\label{def_param_network}
Let $\n$ be an Eulerian network. A continuous function $w$ from $[0,\lambda(\n)]$ to $\n$ is called a parametrization of $\n$ if it satisfies
\begin{itemize}
\item[i)] $w(0) = w(\lambda(\n))$;
\item[ii)] $w$ is surjective;%$\forall x\in\n\setminus V$ $w^{-1}(\{x\})$ is a singleton,
\item[iii)] $\forall t_1,t_2 \in [0,\lambda(\n)]$ $\lambda(w([t_1,t_2])) = |t_1-t_2|$ (the speed of $w$ is $1$).
\end{itemize}
Moreover such a function $w$ can be extended to a $\lambda(\n)$-periodic function on $\R_+$ which is still denoted $w$.
\end{definition}

\begin{lemma}
\label{lem_tour}
Let $\n$ be an Eulerian network, then there exists a parametrization of $\n$.
\end{lemma}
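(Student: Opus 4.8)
The plan is to construct the parametrization explicitly from an Eulerian tour. Since $\n$ is Eulerian, I fix such a tour $\pi = (u_1,\dots,u_n)\in P(u,u)$, so that consecutive points $u_k,u_{k+1}$ lie on a common edge, $\bigcup_{k=1}^{n-1}[u_k,u_{k+1}]=\n$, and $\sum_{k=1}^{n-1}\lambda([u_k,u_{k+1}])=\lambda(\n)$. After discarding the (finitely many) steps with $u_k=u_{k+1}$, I may assume each $\ell_k := \lambda([u_k,u_{k+1}])=d(u_k,u_{k+1})>0$. I then set $T_0=0$ and $T_k=\sum_{j=1}^k\ell_j$, so that $T_{n-1}=\lambda(\n)$.

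On each time-interval $[T_{k-1},T_k]$ I let $w$ traverse the interval $[u_k,u_{k+1}]$ at unit speed: writing $u_k=(a_k,b_k,\alpha_k)$ and $u_{k+1}=(a_k,b_k,\beta_k)$ on their common edge, I define
\[
w(t)=\left(a_k,\,b_k,\,\alpha_k+\frac{t-T_{k-1}}{\ell_k}(\beta_k-\alpha_k)\right),\qquad t\in[T_{k-1},T_k].
\]
Since the right endpoint of the $k$-th piece and the left endpoint of the $(k+1)$-th piece both equal $u_{k+1}$, the pieces agree at the junction times $T_k$, so $w$ is well defined and continuous on $[0,\lambda(\n)]$. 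Property i) follows from $u_1=u_n=u$, giving $w(0)=u_1=u_n=w(\lambda(\n))$; property ii) follows because the image of $w$ is exactly $\bigcup_{k=1}^{n-1}[u_k,u_{k+1}]$, which equals $\n$ as $\pi$ is a tour.

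The crux is property iii). I first observe that $w$ restricted to a single piece $[T_{k-1},T_k]$ is an isometry onto $[u_k,u_{k+1}]$ (the $\alpha$-coordinate moves at rate $|\beta_k-\alpha_k|/\ell_k$, which traverses distance $1$ per unit time), so $\lambda(w([a,b]))=b-a$ for any subinterval $[a,b]$ of one piece. For general $t_1<t_2$ I split $[t_1,t_2]$ at the interior junction times and write $w([t_1,t_2])$ as a finite union of images of such sub-pieces, the sum of whose time-lengths is $t_2-t_1$; subadditivity of $\lambda$ then gives $\lambda(w([t_1,t_2]))\le t_2-t_1$.

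The main obstacle, and the only place the Eulerian hypothesis is genuinely needed, is the reverse inequality, which requires that the images of distinct sub-pieces overlap only on a Lebesgue-null set. Here I invoke the length condition: since $\bigcup_k[u_k,u_{k+1}]=\n$ while $\sum_k\lambda([u_k,u_{k+1}])=\lambda(\n)$, subadditivity forces the intervals $[u_k,u_{k+1}]$ to be pairwise disjoint up to measure zero (meeting only at endpoints), even when two of them sit on the same edge. Consequently the images of any two distinct sub-pieces, lying in distinct such intervals, overlap in measure zero, so the measure of the union equals the sum of the measures, namely $t_2-t_1$. This yields $\lambda(w([t_1,t_2]))=|t_1-t_2|$ and hence property iii). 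Finally, since $w(0)=w(\lambda(\n))$, the map extends to the claimed $\lambda(\n)$-periodic function on $\R$.
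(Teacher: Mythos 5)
Your proposal is correct and follows essentially the same route as the paper: both construct the parametrization as the piecewise-linear, unit-speed traversal of an Eulerian tour, with time breakpoints at the partial sums of the edge lengths. The paper leaves the verification of properties i)--iii) as "not difficult to verify," whereas you carry it out, in particular using the Eulerian length identity to show the intervals $[u_k,u_{k+1}]$ are pairwise almost disjoint and hence that $\lambda(w([t_1,t_2]))=|t_1-t_2|$; this fills in exactly the details the paper omits.
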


The proof of \cref{lem_tour} is provided in the \hyperlink{app:appendix_patrol}{Appendix of Chapter 4}. It is now possible to define the uniform strategy of the patroller. The idea behind this strategy is that the patroller uniformly chooses a starting point in $\n$, and then follows a parametrization as in \cref{def_param_network} above.

\begin{definition}
Suppose $\n$ is an Eulerian network. Let $w$ be a parametrization of $\n$. Denote $(w_{t_0})_{t_0\in[0,\lambda(\n)]}$ the family of $\lambda(\n)$-periodic walks such that $w_{t_0}(\cdot) = w(t_0 + \cdot).$
The patroller's uniform strategy is given by the uniform choice of $t_0\in [0,\lambda(\n)]$.
\end{definition}

The next theorem is the main result on patrolling games for networks. It gives a simple expression of the value of patrolling games played on any Eulerian network. The result relies on the fact that for such networks, the patroller can achieve the upper bound of \cref{proposition_upperbound} using her uniform strategy. Note that \cref{th_euleriannetwork} below is related to \citep[Theorem 1]{alpernetal2016a}, as well as \citep[theorem 13]{alpernetal2011} for Hamiltonian graphs in the discrete case.

\begin{theorem}
\label{th_euleriannetwork}
If $\n$ is an Eulerian network, then \[V_{\n}(m,0)=\min\left(\frac{m}{\lambda(\n)},1\right).\] Moreover the attacker's and the patroller's uniform strategies are optimal.
\end{theorem}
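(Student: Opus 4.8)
The plan is to establish the two matching bounds $V_{\n}(m,0)\leq\min(m/\lambda(\n),1)$ and $V_{\n}(m,0)\geq\min(m/\lambda(\n),1)$, realized respectively by the attacker's and the patroller's uniform strategies.

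For the upper bound, I would first observe that on a network with $r=0$ the maximum discovery rate is $\rho=1$. Indeed, any walk is $1$-Lipschitz, so the image $w([0,t])$ has length (hence Lebesgue measure) at most $t$, while walking along a single edge attains $\lambda(w([0,t]))=t$; moreover $\lambda(B_0)=0$ since a ball of radius $0$ is a single point. Plugging $\rho=1$ and $\lambda(B_r)=0$ into proposition \ref{proposition_upperbound} gives $V_{\n}(m,0)\leq m/\lambda(\n)$, and since payoffs are probabilities one also has $V_{\n}(m,0)\leq 1$; together these yield the upper bound. This is exactly the quantity guaranteed by the attacker's uniform strategy $a_\lambda$ (its guarantee $\lambda(w([0,m]))/\lambda(\n)$ never exceeds $m/\lambda(\n)$), which will establish its optimality.

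For the lower bound I would analyze the patroller's uniform strategy against an arbitrary pure attack $(y,t)\in\A$. Let $w$ be the chosen parametrization, which exists by lemma \ref{lem_tour}, extended to a $\lambda(\n)$-periodic function. Detection occurs if and only if $y\in w([t_0+t,t_0+t+m])$. Setting $s=t_0+t$ and using that $t_0$ is uniform on $[0,\lambda(\n)]$ together with the $\lambda(\n)$-periodicity of $w$, the phase $s$ is uniform on the circle $\R/\lambda(\n)\mathbb{Z}$; in particular the attack time $t$ becomes irrelevant. The detection probability thus equals the normalized measure of $\{s:y\in w([s,s+m])\}$. Since $w$ is surjective, fix a preimage $\tau$ with $w(\tau)=y$; then every $s$ in the arc $[\tau-m,\tau]$ detects $y$, and this arc has measure $\min(m,\lambda(\n))$ on the circle. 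Hence the detection probability is at least $\min(m,\lambda(\n))/\lambda(\n)=\min(m/\lambda(\n),1)$ for every attack, so the patroller's uniform strategy guarantees the lower bound.

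Combining the two bounds gives $V_{\n}(m,0)=\min(m/\lambda(\n),1)$, with both uniform strategies optimal (when $m\geq\lambda(\n)$ the value is $1$ and optimality is automatic). The main obstacle is the periodicity reduction in the lower bound: one must verify carefully that the uniform choice of starting phase makes the attack time irrelevant, and that a single preimage of $y$ already contributes a detecting arc of full length $\min(m,\lambda(\n))$, independently of how the possibly multiple preimages of $y$ at the nodes are spread around the circle.
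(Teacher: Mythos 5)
Your proposal is correct and follows essentially the same route as the paper: the upper bound via Proposition \ref{proposition_upperbound} with $\rho=1$ and $\lambda(B_0)=0$, and the lower bound by showing that a uniformly chosen starting phase of a parametrization places a detecting window of length $\min(m,\lambda(\n))$ around some preimage of the attack point. Your explicit reduction to the circle $\R/\lambda(\n)\mathbb{Z}$ just makes precise the wrap-around step that the paper's computation of $\Proba(t_0\in[t_y-t-m,t_y-t])=m/\lambda(\n)$ leaves implicit.
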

The proof of \cref{th_euleriannetwork} is provided in the \hyperlink{app:appendix_patrol}{Appendix of Chapter 4}. It is interesting to note that in search games with an immobile hider, the uniform strategies of the searcher and the hider are also optimal in Eulerian networks.

\subsubsection{The three parallel arc network}

In this section, we compute bounds on the value of a patrolling game played over the network with three parallel arcs, which plays an important role in the search game literature. Indeed despite its simple shape, finding the value and optimal strategies is a difficult task, as mentioned in the introduction.

We consider again the network $\n_2$ represented in \cref{network_2}. In this example, we take $l(u_1,u_2)=l(u_2,u_3)=l(u_1,u_4)=l(u_4,u_3)=1/2$ and $l(u_1,u_3)=1$. Notice that $\lambda(\n_2)=3$. We compute the following bounds on the value of $(\n_2,m,0)$:

\[V_{\n_2}(m,0)
\left\{
    \begin{array}{ll}
= \frac{m}{3} &\text{ if } m\leq 2\\
\in \left[ \frac{5m-2}{3(m+2)} , 1-\frac{1}{3}\left(\frac{4-m}{2}\right)^2 \right] &\text{ if } m\in \left[2,\frac{10}{3}\right]\\
\in \left[ \frac{14-2m}{3(6-m)} , 1-\frac{1}{3}\left(\frac{4-m}{2}\right)^2 \right]&\text{ if } m\in \left[\frac{10}{3 },4\right]\\
= 1 \text{ if }& m\geq 4.
    \end{array}
\right.\]
These bounds are plotted on \cref{fig_bounds} below.
\begin{figure}[!ht]
\centering
\begin{tikzpicture}[xscale=1,yscale=2]
\draw[->] (0,0) -- (6,0);
\draw (6,0) node [above] {$m$};
\draw[->] (0,0) -- (0,1.5);
\draw (0,1.5) node [right] {$V_{\n_2}(m,0)$};
\draw (2,0) node [below] {$2$};
\draw (4,0) node [below] {$4$};
\draw (0,1) node [left] {$1$};
\draw (0,2/3) node [left] {$2/3$};
\draw [dashed] (2,0) -- (2,2/3);
\draw [dashed] (4,0) -- (4,1);
\draw [dashed] (0,2/3) -- (2,2/3);
\draw [dashed] (0,1) -- (4,1);
\draw[domain=0:2] plot(\x,{\x/3});
\draw[domain=2:10/3] plot(\x,{(5*\x-2)/(3*(\x+2))});
\draw[domain=10/3:4] plot(\x,{(14-2*\x)/(3*(6-\x))});
\draw[domain=2:4] plot(\x,{1-1/3*((4-\x)/2)^2});
\draw[domain=4:6] plot(\x,{1});
\end{tikzpicture}
\caption{Bounds on the value of the game as a function of $m$}
\label{fig_bounds}
\end{figure}
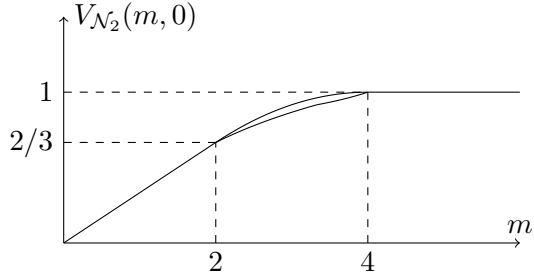

\paragraph{First case: $m\leq 2$.} Recall that (\cref{proposition_upperbound}) $V_{\n_2}(m,0) \leq \frac{m}{3}$ for all $m\geq 0$. Suppose the patroller uniformly chooses one of the three Eulerian sub-networks of $\n_2$ of length $2$. Then by \cref{th_euleriannetwork} she guarantees $m/2$ in these sub-networks. By symmetry, the patroller guarantees $\frac{2}{3}\frac{m}{2} = \frac{m}{3}$ in $\n_2$.
We also give an alternative strategy of the patroller which guarantees $m/3$, and which will be useful for the next case $2<m<4$.
Let $\pi^1 = (u_1,u_2,u_3,u_1,u_4,u_3)$ and $\pi^2 = (u_3,u_2,u_1,u_3,u_4,u_1)$ be two paths. $\pi^1$ and $\pi^2$ naturally induce two walks on $[0,3]$ at speed $1$, respectively denoted $w^1$ and $w^2$. For all $u\in \n_2$ and all $i\in\{1,2\}$ there exists a unique $t^i_u\in [0,3]$ such that $w^i(t^i_u) = u$, except for $u_1$ and $u_3$ for which the corresponding $t^i_u$ may be taken arbitrarily. Now for all $u\in \n_2$ and all $t\in\R_+$, define
\[w^1_u(t)=\left\{
    \begin{array}{ll}
w^1(t+t^1_u) &\text{ if } t\in \left[0,3-t^1_u\right]\\
w^2\left(t-(3(2k+1)-t^1_u)\right) &\text{ if } t\in \left(3(2k+1)-t^1_u,3(2k+2)-t^1_u\right]\\
w^1\left(t-(3(2k+2)-t^1_u)\right) &\text{ if } t\in \left(3(2k+2)-t^1_u,3(2k+3)-t^1_u\right]
    \end{array}
\right.\]
for all $k\in\N$. The walk $w^1_u$ starts at $t^1_u$ and alternates between following $w^1$ and $w^2$. The walk $w^2_u$ is defined analogously by switching the superscripts $1$ and $2$ in the definition above. Denote $\mu^0$ the uniform choice of a walk in $(w^i_u)^{i\in \{1,2\}}_{u\in \n_2}$.
It is not difficult to check that $\mu^0$ guarantees $m/3$ to the patroller (moreover $\mu^0$ yields a payoff of $m/3$ for every $(y,t)\in \A$). Hence $V_{\n_2}(m,0) = \frac{m}{3}.$

\paragraph{Second case: $2< m <4 $.} We detail the computation for $m=3$. The walks $w^3$, $w^4$ and $w^5$ hereafter can be adapted and similar strategies can be used to derive the bounds for all $m\in(2,4)$.

Let us define three paths $\pi^3$, $\pi^4$ and $\pi^5$ as in \cref{fig_w1,fig_w2,fig_w3} respectively. That is, $\pi^3 = (u_1,u_2,u_3,u_5,u_3,u_1,u_6,u_1)$, $\pi^4 = (u_1,u_7,u_1,u_3,u_8,u_3,u_4,u_1)$ and $\pi^5 = (u_1,u_2,u_3,u_{10},u_3,u_4,u_1,u_9,u_1)$. Where $u_5 = (u_3,u_4,1/2)$, $u_6 = (u_1,u_4,1/2)$, $u_7 = (u_1,u_2,1/2)$, $u_8 = (u_2,u_3,1/2)$, $u_9 = (u_1,u_3,1/4)$ and $u_{10} = (u_1,u_3,3/4)$.

\begin{figure}[!ht]
  \begin{minipage}[b]{0.3\textwidth}
    \begin{tikzpicture}[scale=0.5]
\draw[-] (0,4) to (-2,0);
\draw[-] (0,4) to (2,0);
\draw[-] (-2,0) to (0,-4);
\draw[-] (2,0) to (0,-4);
\draw[-] (0,4) to (0,-4);

\draw[dashed] (1.2,2.2) to (0,4.4);
\draw[dashed] (0,4.4) to (-2.25,0);
\draw[dashed] (-2.25,0) to (0,-4.4);
\draw[dashed] (0,-4.4) to (1.2,-2.2);
\draw[dashed] (1.2,-2.2) to (0.8,-1.8);
\draw[dashed] (0.8,-1.8) to (0.2,-3.0);
\draw[dashed] (0.2,-3.0) to (0.2,3);
\draw[dashed] (0.2,3) to (0.8,1.8);
\draw[dashed] (0.8,1.8) to (1.2,2.2);

\fill (0.2,4.4) node[above] {$u_1$};
\fill (-2.2,0) node[left] {$u_2$};
\fill (2.2,0) node[right] {$u_4$};
\fill (0.2,-4.4) node[below] {$u_3$};
\fill (1,-2.8) node[right] {$u_5$};
\fill (1,2.8) node[right] {$u_6$};
\end{tikzpicture}
    \caption{The path $\pi^3$}
    \label{fig_w1}
  \end{minipage}
    \hfill
  \begin{minipage}[b]{0.3\textwidth}
    \begin{tikzpicture}[scale=-0.5]
\draw[-] (0,4) to (-2,0);
\draw[-] (0,4) to (2,0);
\draw[-] (-2,0) to (0,-4);
\draw[-] (2,0) to (0,-4);
\draw[-] (0,4) to (0,-4);
\draw[dashed] (1.2,2.2) to (0,4.4);
\draw[dashed] (0,4.4) to (-2.25,0);
\draw[dashed] (-2.25,0) to (0,-4.4);
\draw[dashed] (0,-4.4) to (1.2,-2.2);
\draw[dashed] (1.2,-2.2) to (0.8,-1.8);
\draw[dashed] (0.8,-1.8) to (0.2,-3.0);
\draw[dashed] (0.2,-3.0) to (0.2,3);
\draw[dashed] (0.2,3) to (0.8,1.8);
\draw[dashed] (0.8,1.8) to (1.2,2.2);

\fill (1,-2.8) node[left] {$u_7$};
\fill (1,2.8) node[left] {$u_8$};

\fill (-0.2,-4.4) node[above] {$u_1$};
\fill (2.2,0) node[left] {$u_2$};
\fill (-2.2,0) node[right] {$u_4$};
\fill (-0.2,4.4) node[below] {$u_3$};
\end{tikzpicture}
    \caption{The path $\pi^4$}
    \label{fig_w2}
  \end{minipage}  \hfill
  \begin{minipage}[b]{0.3\textwidth}
    \begin{tikzpicture}[scale=0.5]
\draw[-] (0,4) to (-2,0);
\draw[-] (0,4) to (2,0);
\draw[-] (-2,0) to (0,-4);
\draw[-] (2,0) to (0,-4);
\draw[-] (0,4) to (0,-4);
\draw[dashed] (1.6,0) to (0.2,-3.0);
\draw[dashed] (1.6,0) to (0.2,3);
\draw[dashed] (-1.6,0) to (-0.2,-3.0);
\draw[dashed] (-1.6,0) to (-0.2,3);
\draw[dashed] (0.2,-3.0) to (0.2,-1.5);
\draw[dashed] (0.2,3.0) to (0.2,1.5);
\draw[dashed] (-0.2,-3.0) to (-0.2,-1.5);
\draw[dashed] (-0.2,3.0) to (-0.2,1.5);
\draw[dashed] (0.2,1.5) to (-0.2,1.5);
\draw[dashed] (0.2,-1.5) to (-0.2,-1.5);

\fill (0.2,4.4) node[above] {$u_1$};
\fill (-2.2,0) node[left] {$u_2$};
\fill (2.2,0) node[right] {$u_4$};
\fill (0.2,-4.4) node[below] {$u_3$};

\fill (0,1.3) node[right] {$u_9$};
\fill (0,-1.3) node[right] {$u_{10}$};

\end{tikzpicture}
    \caption{The path $\pi^5$}
    \label{fig_w3}
  \end{minipage}
\end{figure}

$\pi^3$, $\pi^4$ and $\pi^5$ naturally induce three $3$-periodic walks at speed $1$, denoted respectively $w^3$, $w^4$ and $w^5$. These are such that for $i\in\{3,4,5\}$, $w^i$ intercepts any attack on $w^i([0,3])$ with probability $1$.

With a slight abuse of notation, for $y\in [0,1/2]$, denote $y$ the point $(u_1,u_3,y)$ and $1-y$ the point $(u_1,u_3,1-y)$. By symmetry it is enough to consider attacks occurring at $y$. Moreover, $\mu^0$, $w^3$, $w^4$ and $w^5$ make the patroller time indifferent, hence we only consider attacks at time $0$.

$\mu^0$ intercepts the attack $(y,0)$ with probability $1-\frac{1-2y}{6}=\frac{5}{6}+\frac{y}{3}$. Indeed, only the walks $w^1_u$, such that $u$ belongs to the open interval $\{(u_1,u_3,\alpha)\ \vert \ \alpha\in (y,1-y)\}$ do not intercept the attack.
Finally, define $\widetilde{\mu} = \frac{1}{15}(\delta_{w^3}+\delta_{w^4}+\delta_{w^5})+\frac{4}{5}\mu^0$, where $\delta_w$ is the Dirac measure at $w\in \W$. 

At any time, an attack at $y\leq \frac{1}{4}$ is intercepted by $\widetilde{\mu}$ with probability
\[\frac{1}{15}\cdot 3+\frac{4}{5}\left(\frac{5}{6}+\frac{y}{3}\right)\geq\frac{3}{15}+\frac{4}{5}\cdot\frac{5}{6} = \frac{13}{15}.\]
An attack at $y> 1/4$ is intercepted by $\widetilde{\mu}$ with probability
\[\frac{1}{15}\cdot 2 + \frac{4}{5}\left(\frac{5}{6}+\frac{y}{3}\right) \geq\frac{2}{15} + \frac{4}{5}\left(\frac{5}{6}+\frac{1}{12}\right) = \frac{13}{15}.\]
Hence $V_{\n_2}(3,0)\geq \frac{13}{15}.$

Define the following attack $\widetilde{a}$: choose uniformly a point in $\n_2\times [0,3]$. The tour $(u_1,u_2,u_3,u_1,u_4,u_3,u_2,u_1,u_3,u_4,u_1)$ induces a $6$-periodic walk $w^6$ which is a best response for the patroller. Moreover $g_{3,0}(w^6,\widetilde{a}) = 11/12$.
Hence $V_{\n_2}(3,0)\leq \frac{11}{12}.$

\paragraph{Third case: $m\geq 4$.}
The tour $(u_1,u_2,u_3,u_1,u_4,u_3,u_1)$ induces a $4$-periodic walk which guarantees $1$ to the patroller. Hence $V_{\n_2}(m,0) = 1.$

\subsection{Patrolling a simple search space in \texorpdfstring{$\R^2$}{R2}}
\label{subsec_simplesearch}

In this section, we are interested in patrolling games in $\R^2$ for a large class of search spaces called simple search spaces. To introduce this class of search spaces, we first need to recall the notion of total variation of a function $f$.

\begin{definition}
Let $a>0$. Let $f : [0,a]\to \R^n$ be a continuous function. Then the total variation of $f$ is the quantity:
\[TV(f) = \sup \left\lbrace\sum_{i=1}^n \eucl{f(t_i) - f(t_{i-1})} \biggm| n\in\N^\ast, 0=t_0<t_1<\dots<t_n=a\right\rbrace.\]
If $TV(f)<+\infty$, then $f$ is said to have bounded variation.
\end{definition}

The next definition introduces a classical assumption on the boundary of a search space in $\R^2$. This is a weak assumption already made in \citep{Gal:AP1980} and \citep{AlpGal:Kluwer2003}.

\begin{definition}
Let $a>0$, let $f_1$ and $f_2$ be two continuous functions from $[0,a]$ to $\R$ such that $f_1\geq f_2$, $f_1\neq f_2$, and $f_1$ and $f_2$ have bounded variation.
Then the nonempty compact set $\{(x,t)\in[0,a]\times \R \ | \ f_2(x)\leq t \leq f_1(x)\}$ is called an elementary search space.

Let $Q$ be the finite union of elementary search spaces such that any two have disjoint interiors. If $Q$ is path-connected, then it is called a simple search space.
\end{definition}

The next theorem is the main result on patrolling games on simple search spaces. It gives a simple asymptotic expression of the value as the detection radius $r$ goes to $0$. The result relies on the fact that the patroller can use a uniform strategy in the spirit of what has been done in the previous section for Eulerian networks. This strategy yields a lower bound that asymptotically matches the upper bound of \cref{proposition_upperbound}.

As one would expect, the value goes to $0$ as $r$ goes to $0$. It is interesting to note that due to the movement of the patroller the convergence is linear in $r$ and not quadratic. Indeed, the relevant parameter is the sweep width of the patroller and not the area of detection.

\begin{theorem}
\label{thm_simplesearchspace}
If $Q$ is a simple search space endowed with the Euclidean norm, then \[V_Q(m,r)\sim \frac{2rm}{\lambda(Q)},\] as $r$ goes to $0$.
\end{theorem}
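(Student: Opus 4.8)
The plan is to prove the asymptotic equivalence by establishing matching upper and lower bounds, where the lower bound carries essentially all the difficulty. The upper bound is immediate from the tools already developed: by Proposition \ref{proposition_upperbound}, $V_Q(m,r) \le (m\rho + \lambda(B_r))/\lambda(Q)$, and since a simple search space has nonempty interior, Remark \ref{rem_rho} gives $\rho = 2r$, while $\lambda(B_r) = \pi r^2$ in the Euclidean plane. Hence $V_Q(m,r) \le (2rm + \pi r^2)/\lambda(Q)$, which is $\frac{2rm}{\lambda(Q)}(1 + o(1))$ as $r \to 0$ because $\pi r^2 = o(r)$.

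For the lower bound I would exhibit a patroller strategy achieving interception probability $\frac{2rm}{\lambda(Q)}(1 - o(1))$ against every attack, mimicking the uniform strategy used for Eulerian networks. The idea is a boustrophedon (lawnmower) sweep: partition the horizontal extent $[0,a]$ into strips of width $2r$, and on each strip run a vertical segment covering the full fiber $\{t : f_2(x) \le t \le f_1(x)\}$ of $Q$, connecting consecutive segments by short horizontal moves. Since $Q$ is a finite path-connected union of elementary search spaces, these pieces can be concatenated into a single closed unit-speed curve $w$ of length $T$, extended $T$-periodically as in Definition \ref{def_param_network}; the patroller then chooses the phase $t_0 \in [0,T]$ uniformly. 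The two quantities to estimate are $T$ and, for each $y \in Q$, the fraction of phases for which $y$ is caught in a window of length $m$. Because each unit of arc length of a vertical segment sweeps a band of area $2r$, the segments account for total length $\frac{\lambda(Q)}{2r}(1 + o(1))$; here the bounded variation of $f_1$ and $f_2$ is essential, as it bounds the total length of the turnarounds and horizontal connectors by $O(a + TV(f_1) + TV(f_2)) = O(1)$, negligible against $\lambda(Q)/(2r)$. Thus $T = \frac{\lambda(Q)}{2r}(1 + o(1))$.

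Next, for a fixed attack point $y$ occurring at time $0$ (the sweep makes the patroller essentially time-indifferent, so restricting to $t=0$ is harmless up to lower-order terms), $y$ lies within distance $r$ of $w(\tau)$ precisely while the patroller traverses the vertical segment of the strip containing $y$ near its height, during a sub-interval of length at most $2r$ per period. A uniformly chosen phase places the length-$m$ window $[t_0, t_0 + m]$ so as to meet this coverage interval with probability at least $m/T$, giving interception probability at least $\frac{m}{T} = \frac{2rm}{\lambda(Q)}(1 - o(1))$. Combining the two bounds yields $V_Q(m,r) \sim \frac{2rm}{\lambda(Q)}$.

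The main obstacle will be the geometric bookkeeping of the lower bound: verifying that the lawnmower curve genuinely covers $Q$ within radius $r$ (especially along the irregular boundary described by $f_1$ and $f_2$ and across the finitely many elementary pieces), and controlling all error terms — the overlap of coverage intervals from adjacent strips, the boundary strips of width less than $2r$, and the turnaround geometry — uniformly in $y$, so that they collectively contribute only $o(1/r)$ to $T$ and $o(r)$ to the interception probability. The bounded variation hypothesis is precisely what keeps these corrections subdominant.
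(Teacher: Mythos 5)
Your proposal is correct and follows essentially the same route as the paper: the upper bound via Proposition \ref{proposition_upperbound} with $\rho = 2r$ and $\lambda(B_r)=\pi r^2 = o(r)$, and the lower bound via a uniformly-phased closed unit-speed tour of length $T = (1+o(1))\lambda(Q)/(2r)$ that $r$-covers $Q$, yielding interception probability $m/T$ against every attack. The only difference is that the paper imports the existence of such a short $r$-tour from Lemma 3.39 of \cite{alperngal2003} (restated as Lemma \ref{lemma_rtour}) and then reparametrizes it via Lemma \ref{lem_parametrization}, rather than building the lawnmower sweep by hand; if you do carry out the construction, be aware that each vertical segment must span the union of the fibers $[f_2(x),f_1(x)]$ over its entire strip (not just the fiber at the segment's own abscissa, which as written need not $r$-cover nearby points where $f_1$ or $f_2$ jumps), and it is precisely the bounded-variation hypothesis that keeps the resulting extra length $O(TV(f_1)+TV(f_2))=o(1/r)$.
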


The proof of \cref{thm_simplesearchspace} is provided in the \hyperlink{app:appendix_patrol}{Appendix of Chapter 4}.

\section{Hiding games}
\label{sec_hiding}

Recall that the value of a hiding game is equal to the value of a patrolling game with time limit $m$ equal to $0$. Hiding games have a value which represents the probability (up to $\varepsilon$) that the searcher and the hider are at distance less that $r$ when they play ($\varepsilon$-)optimally.

\begin{proposition}
\label{prop_existence_value_hide}
The hiding game $(Q,r)$ played in mixed strategies has a value denoted $V_Q(r)$. Moreover the searcher has an optimal strategy and the hider has an $\varepsilon$-optimal strategy with finite support.
\end{proposition}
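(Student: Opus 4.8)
The plan is to work directly on the (simpler) strategy space $Q$, pass to mixed strategies $\Delta(Q)$, and exploit that $Q\subset\R^n$ compact makes $\Delta(Q)$ convex and weak-$\ast$ compact (metrizable). Writing $\Pi(\mu,\nu)=\int_Q\int_Q h_r(x,y)\,d\mu(x)\,d\nu(y)$ for the bilinear extension, one has $\Pi(\mu,\delta_y)=\mu(B_r(y))$ and $\Pi(\delta_x,\nu)=\nu(B_r(x))$, so the lower and upper values are $\underline V=\sup_\mu\inf_{y\in Q}\mu(B_r(y))$ and $\overline V=\inf_\nu\sup_{x\in Q}\nu(B_r(x))$. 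The key structural fact is that $h_r=\mathbf{1}_{\{d(x,y)\le r\}}$ is the indicator of a \emph{closed} set, hence upper semicontinuous; consequently $x\mapsto\nu(B_r(x))$ is u.s.c.\ on $Q$ and $\mu\mapsto\Pi(\mu,\nu)$ is u.s.c.\ on $\Delta(Q)$. The minimizing hider's reward $y\mapsto\mu(B_r(y))$ is, however, only u.s.c.\ (not l.s.c.), so a minimax theorem such as Sion's does not apply directly. This asymmetry is precisely what will force the searcher to have an optimal strategy while the hider only gets an $\varepsilon$-optimal one; the plan is to overcome it by approximating $h_r$ from above by continuous payoffs.

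For $k\in\N^\ast$ I would set $h_k(x,y)=\max\!\big(0,\,1-k\max(0,d(x,y)-r)\big)$. Each $h_k$ is continuous on $Q\times Q$, the sequence is nonincreasing, and $h_k\downarrow h_r$ pointwise. Let $\Pi_k$ be the bilinear extension of $h_k$; since $h_k$ is continuous, $\Pi_k$ is jointly weak-$\ast$ continuous on the compact convex set $\Delta(Q)\times\Delta(Q)$, and a classical minimax theorem for continuous payoffs (Glicksberg's or Sion's) gives a value $v_k$ and optimal strategies $\mu_k,\nu_k$, with $v_k$ nonincreasing in $k$; put $v_\infty=\lim_k v_k$.

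Existence of the value and optimality for the searcher then come from a double-limit argument. Since $h_r\le h_k$, for every $\nu$ one has $\sup_x\nu(B_r(x))\le\sup_x\int h_k(x,\cdot)\,d\nu$, so $\overline V\le v_k$, hence $\overline V\le v_\infty$. For the reverse, extract a weak-$\ast$ limit $\mu_\infty$ of a subsequence of $(\mu_k)$. Fix $y\in Q$ and $l\in\N^\ast$: for $k\ge l$, monotonicity gives $\int h_l(\cdot,y)\,d\mu_k\ge\int h_k(\cdot,y)\,d\mu_k\ge v_k\ge v_\infty$; letting $k\to\infty$ along the subsequence, with $h_l(\cdot,y)\in C(Q)$, yields $\int h_l(\cdot,y)\,d\mu_\infty\ge v_\infty$. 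Letting $l\to\infty$ and using dominated convergence ($h_l(\cdot,y)\downarrow\mathbf{1}_{B_r(y)}$) gives $\mu_\infty(B_r(y))\ge v_\infty$ for all $y$, so $\underline V\ge\inf_y\mu_\infty(B_r(y))\ge v_\infty$. Combining, $\underline V=\overline V=v_\infty=:V_Q(r)$, and $\mu_\infty$ is an optimal strategy for the searcher.

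For the hider, I would recover finite support from the approximating games. The map $(x,\nu)\mapsto\int_Q h_k(x,y)\,d\nu(y)$ is jointly continuous (uniform continuity of $h_k$ in $x$ together with weak-$\ast$ convergence in $\nu$), so $\nu\mapsto\sup_{x\in Q}\int h_k(x,\cdot)\,d\nu$ is weak-$\ast$ continuous; since $\Delta_f(Q)$ is weak-$\ast$ dense in $\Delta(Q)$, we get $v_k=\inf_{\nu\in\Delta_f(Q)}\sup_x\int h_k(x,\cdot)\,d\nu$. Given $\varepsilon>0$, choose $k$ with $v_k\le V_Q(r)+\varepsilon/2$ and a finitely supported $\nu'$ with $\sup_x\int h_k(x,\cdot)\,d\nu'\le v_k+\varepsilon/2$; then $h_r\le h_k$ yields $\int_Q h_r(x,y)\,d\nu'(y)=\nu'(B_r(x))\le V_Q(r)+\varepsilon$ for every $x$, so $\nu'\in\Delta_f(Q)$ is the desired $\varepsilon$-optimal strategy. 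I expect the main obstacle to be exactly the passage in the third paragraph: because $h_r$ is merely upper semicontinuous, one cannot directly take limits against it, and the order of the two limits (first $k\to\infty$ against the fixed continuous $h_l$, only then $l\to\infty$) is essential. This same lack of lower semicontinuity is what prevents the hider from having an exact optimal strategy and limits the conclusion to an $\varepsilon$-optimal one.
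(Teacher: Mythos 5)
Your proof is correct, but it takes a genuinely different route from the paper. The paper obtains this proposition (like Proposition \ref{prop_existence_value}) as an immediate application of a cited Sion-type corollary (Proposition A.10 in Sorin, 2002): the maximizer's pure-strategy set $Q$ is a compact metric space and $h_r(\cdot,y)=\mathbf{1}_{B_r(y)}$ is upper semi-continuous for each $y$, so the mixed extension $(\Delta(Q),\Delta_f(Q),h_r)$ has a value, the searcher has an optimal strategy, and the finitely supported $\varepsilon$-optimal strategy for the hider comes for free from the $\inf$ being taken over $\Delta_f(Q)$. What you do instead is essentially reprove that corollary for this particular kernel: you approximate the u.s.c.\ payoff from above by continuous kernels $h_k\downarrow h_r$, solve each approximating game by a classical minimax theorem for continuous payoffs, and recover the value and the searcher's optimal strategy via weak-$\ast$ compactness of $\Delta(Q)$ and a carefully ordered double limit (first $k\to\infty$ against a fixed continuous $h_l$, then $l\to\infty$ by monotone convergence). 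The paper's route is shorter and uniform with the patrolling-game case, where the same corollary is applied to the compact space $\W$ (via Ascoli); your route is self-contained, makes the source of the asymmetry between the two players explicit (only u.s.c.\ in the searcher's variable, hence only $\varepsilon$-optimality for the hider), and produces the finitely supported $\varepsilon$-optimal strategy constructively through the approximating games. All the individual steps (monotonicity of $v_k$, $\overline V\le v_k$, the weak-$\ast$ limit argument, density of $\Delta_f(Q)$) check out.
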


\subsection{Equalizing strategies}
\label{subsec_equilizing}

We now study particular strategies called equalizing, these have been introduced by \cite[definition 7.3 and proposition 7.3]{bishopevans2013} when the search space is a graph. We adapt those considerations to our compact setting. A similar notion also appears in \citep[lemma 2.6]{lidbetter2013}.

In hiding games, both players have the same strategy sets and the payoff function is symmetric in the sens that, if $\mu\in\Delta(Q)$ and $y\in Q$, then $h_r(\mu,y) = h_r(y,\mu) = \mu(B_r(y)\cap Q)$.

A strategy for one player is equalizing if the payoff, when this player uses this strategy, does not depend on the strategy of the other player. The interest of equalizing strategies lies in the fact that if such a strategy exists, then it is optimal for both players. 
\begin{definition}Let $Q$ be a search space.
A strategy $\mu\in\Delta(Q)$ is said to be equalizing if there exists $c\in [0,1]$ such that $h_r(\mu,y) = c$ for all $y\in Q$. 
\end{definition}

\begin{proposition}
\label{prop_equalizing}
Let $\mu\in\Delta(Q)$. Then $\mu$ is an equalizing strategy (with constant payoff $c$) if and only if $\mu$ is optimal for both players (and in that case $V_Q(r)=c$).
\end{proposition}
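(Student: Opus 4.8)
The plan is to exploit the symmetry of the hiding game: since $h_r(x,y)=h_r(y,x)$ for all $x,y\in Q$, a single measure $\mu\in\Delta(Q)$ can serve as a strategy for either player, and the two players' guarantee computations are mirror images of one another. The first thing I would record is the payoff obtained by $\mu$ against a pure opponent. When the searcher plays $\mu$ and the hider plays the pure point $y$, the expected payoff is $\int_Q h_r(x,y)\,d\mu(x)=\mu(B_r(y)\cap Q)$, because $h_r(x,y)=1$ exactly when $x\in B_r(y)$. By symmetry of the norm the same expression $\mu(B_r(x)\cap Q)$ is the payoff when the hider plays $\mu$ and the searcher plays the pure point $x$. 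Thus the single function $y\mapsto\mu(B_r(y)\cap Q)$ governs both players' guarantees.

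For the direction ``equalizing $\Rightarrow$ optimal'', suppose $\mu$ is equalizing with constant $c$. Then against every pure hider $y$ the searcher's payoff equals $c$, and by linearity of the integral the payoff is $c$ against every mixed hider strategy as well; hence $\mu$ guarantees the searcher exactly $c$, so $V_Q(r)\geq c$. Running the same argument with the roles reversed, using the symmetry recorded above, the measure $\mu$ played by the hider yields payoff $c$ against every pure, hence every mixed, searcher strategy, so $V_Q(r)\leq c$. Since the hiding game has a value by proposition \ref{prop_existence_value_hide}, these two inequalities give $V_Q(r)=c$ and show $\mu$ is optimal for both players.

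For the converse, suppose $\mu$ is optimal for both players and write $V=V_Q(r)$. Optimality of $\mu$ for the maximizing searcher means that testing it against each pure hider $y$ gives $\mu(B_r(y)\cap Q)\geq V$ for all $y\in Q$. Optimality of $\mu$ for the minimizing hider means that testing it against each pure searcher $x$ gives $\mu(B_r(x)\cap Q)\leq V$ for all $x\in Q$. As these two conditions constrain the same function $y\mapsto\mu(B_r(y)\cap Q)$, together they force $\mu(B_r(y)\cap Q)=V$ for every $y\in Q$, which is precisely the statement that $\mu$ is equalizing with constant $c=V$.

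The argument is short and I do not expect a serious obstacle; the only points requiring care are the reduction from ``guarantees against all mixed strategies'' to ``guarantees against all pure strategies'' (immediate from linearity of the payoff in each player's mixed strategy) and the appeal to the prior existence-of-value result, which is what licenses combining the two one-sided inequalities into the equality $V_Q(r)=c$.
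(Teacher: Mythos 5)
Your proposal is correct and follows essentially the same route as the paper's own proof: both directions reduce to the observation that the single function $y\mapsto\mu(B_r(y)\cap Q)$ is simultaneously the searcher's guarantee against pure hiders and the hider's exposure against pure searchers, and the two one-sided bounds are combined using the existence of the value from proposition \ref{prop_existence_value_hide}. The extra remarks you make about linearity (passing from pure to mixed opponents) and the symmetry of the payoff are left implicit in the paper but are exactly the right justifications.
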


\begin{proof}
Suppose $\mu\in\Delta(Q)$ is an equalizing strategy. If the searcher plays $\mu$, then for all $y\in Q$ $\mu(B_r(y)\cap Q) = c$, hence $V_Q(r)\geq c$. Symmetrically, if the hider plays $\mu$, then for all $x\in Q$ $\mu(B_r(x)\cap Q) = c$, hence $V_Q(r)\leq c$, and $V_Q(r) = c$.

Conversely, suppose $\mu\in\Delta(Q)$ is optimal for both players. Then the searcher guaranties $V_Q(r)$ that is for all $y\in Q$ $\mu(B_r(y)\cap Q) \geq V_Q(r)$, and the hider guaranties $V_Q(r)$ that is for all $x\in Q$ $\mu(B_r(x)\cap Q) \leq V_Q(r)$. Hence for all $y\in Q$ \[\mu(B_r(y)\cap Q) = V_Q(r).\]
\end{proof}

The following game is an example of a hiding game with finite search space without equalizing strategies.

\begin{example}
Let $r=1$ and $Q = \{x_1,x_2,x_3,x_4,x_5\}$ be the finite subset of $\R^2$ such that $x_1 = (0,0)$, $x_2= (0,1)$, $x_3 = (1,1)$, $x_4 = (1,0)$ and $x_5 = (1/2,0)$.
Denote for $i\in \{1,\dots,5\}$ $Q_i = \{j\in\{1,\dots,5\} \ \vert \ \eucl{x_i-x_j} \leq r\}$.
That is $Q_1=\{1,2,4,5\}$, $Q_2=\{1,2,3\}$, $Q_3=\{2,3,4\}$, $Q_4=\{1,3,4,5\}$ and $Q_5=\{1,4,5\}$.

The game $(Q,r)$ admits an equalizing strategy if and only if the following system of equations admits a solution $p=(p_i)_{1\leq i \leq 5}$:
\begin{align}
\label{eq_finite}
\left\{
    \begin{array}{l}
p_i\geq 0 \text{ for all } i\in\{1,\dots,5\}\\
\sum_{i=1}^5 p_i = 1\\
\sum_{i\in Q_1} p_i = \sum_{i\in Q_j} p_i \text{ for all } j\in\{2,\dots,5\}. 
    \end{array}
\right.
\end{align}
It is easy to verify that this system does not admit a solution, hence the game $(Q,r)$ does not have an equalizing strategy.
\end{example}

\subsection{An asymptotic result for hiding games}
\label{subsec_asymptotichinding}

The next theorem is the main result on hiding games. For any search space $Q\subset \R^n$ with positive Lebesgue measure, it gives a simple asymptotic expression of the value when the detection radius goes to $0$. In this static setting the value is equivalent, as $r$ goes to $0$, to the ratio of the volume of the ball of radius $r$ over the volume of $Q$. This result relies on the fact that the searcher has a strategy that yields a lower bound which asymptotically matches the upper bound of \cref{proposition_upperbound}.

\begin{theorem}
\label{theorem_equivalent_static}
Let $Q$ be a compact subset of $\R^n$. Suppose $\lambda(Q)>0$. Then \[V_Q(r) \sim \frac{\lambda(B_r)}{\lambda(Q)}\] as $r$ goes to $0$.
\end{theorem}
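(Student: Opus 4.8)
The plan is to trap $V_Q(r)$ between two quantities that are both equivalent to $\lambda(B_r)/\lambda(Q)$. The upper bound is essentially free: since the hiding game $(Q,r)$ coincides with the patrolling game $(Q,0,r)$, Proposition \ref{proposition_upperbound} applied with $m=0$ gives $V_Q(r)\le \lambda(B_r)/\lambda(Q)$ outright. (Equivalently, against the hider's uniform strategy $a_\lambda$ every searcher point $x$ scores $\lambda(B_r(x)\cap Q)/\lambda(Q)\le\lambda(B_r)/\lambda(Q)$, and this passes to mixed searcher strategies by linearity.) Hence all the work lies in producing a searcher strategy that guarantees $\lambda(B_r)/\lambda(Q)\cdot(1-o(1))$.

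For the lower bound I would \emph{not} use the uniform measure on $Q$: near the boundary a ball $B_r(y)$ captures only a fraction of its mass (half of it at a smooth boundary point), so uniform play guarantees only a constant fraction of the target. Instead I would inflate and fold. Write $Q^{r}=\{x\in\R^n\mid d(x,Q)\le r\}$ for the closed $r$-neighbourhood, let $\pi_Q$ be a Borel-measurable nearest-point selection of $\R^n$ onto $Q$, and let the searcher play $\mu=\tfrac{1}{\lambda(Q^{r})}(\pi_Q)_{\ast}\big(\lambda|_{Q^{r}}\big)\in\Delta(Q)$, that is, draw a point uniformly in $Q^{r}$ and report its closest point of $Q$. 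Since $Q$ is compact, $Q^{r}$ decreases to $Q$ and $\lambda(Q^{r})\to\lambda(Q)$ as $r\to0$. For $y\in Q$ the payoff against $\mu$ is $\mu(B_r(y))=\lambda\big(\pi_Q^{-1}(B_r(y))\big)/\lambda(Q^{r})$, so the entire lower bound reduces to the geometric inequality
\[\lambda\big(\pi_Q^{-1}(B_r(y))\big)\ \ge\ \lambda(B_r)\qquad\text{for every }y\in Q.\]
Granting this key inequality, the searcher guarantees $\lambda(B_r)/\lambda(Q^{r})$, so $\lambda(B_r)/\lambda(Q^{r})\le V_Q(r)\le\lambda(B_r)/\lambda(Q)$, and the two ends are equivalent.

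The crux, and the step I expect to be the main obstacle, is the key inequality for a \emph{general} compact $Q$. Two cases are easy and explain why it is sharp. If $B_r(y)\subset\mathrm{int}\,Q$, then no point outside $Q$ projects into $B_r(y)$, so $\pi_Q^{-1}(B_r(y))=B_r(y)$ and the inequality is an equality; this shows there is no slack and that $\mu$ is asymptotically optimal. If $Q$ is convex, then $\pi_Q$ is $1$-Lipschitz and fixes $y$, whence $\|\pi_Q(z)-y\|\le\|z-y\|$, so $B_r(y)\subseteq\pi_Q^{-1}(B_r(y))$ and the bound is immediate. For general $Q$ the projection is neither Lipschitz nor everywhere single-valued, and a point of $B_r(y)$ can project to distance up to $2r$ from $y$; the compensating gain comes from the mass of $Q^{r}\setminus B_r(y)$ that projects \emph{into} $B_r(y)$, which must be shown to dominate the loss. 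I would establish this through the distance function $\delta=d(\cdot,Q)$: on $\R^n\setminus Q$ it is $1$-Lipschitz with $|\nabla\delta|=1$ almost everywhere, the nearest point is a.e.\ unique, and each segment $[\pi_Q(z),z]$ lies in the fibre over $\pi_Q(z)$. Splitting $\pi_Q^{-1}(B_r(y))$ into its part inside $Q$ (of measure $\lambda(B_r(y)\cap Q)$) and the outer normal fibres, and applying the coarea formula to $\delta$, should reduce the inequality to a Steiner-type lower bound on the $(n-1)$-dimensional measures of the level sets $\{\delta=t\}$ lying over $B_r(y)\cap Q$.

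Making this rigorous for an arbitrary compact set, rather than a set of positive reach where the tube computation is classical, is the delicate point; the safest route is to approximate $Q$ from outside by finite unions of balls (or by sets of positive reach), for which the fibre/coarea bound is standard, and then pass to the limit. Once the key inequality is in hand, the conclusion is immediate: combining the two bracketing bounds with $\lambda(Q^{r})\to\lambda(Q)$ yields $V_Q(r)\sim\lambda(B_r)/\lambda(Q)$ as $r\to0$.
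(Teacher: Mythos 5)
Your upper bound is correct and coincides with the paper's (proposition \ref{proposition_upperbound} with $m=0$, or equivalently the hider's uniform strategy). The gap is in the lower bound: the ``key inequality'' $\lambda\bigl(\pi_Q^{-1}(B_r(y))\bigr)\ge\lambda(B_r)$ is \emph{false} for a general compact $Q$ with $\lambda(Q)>0$, and it fails at arbitrarily small scales, so the folded-uniform strategy does not achieve the claimed asymptotic. Concretely, in $\R$ take
\[Q=[2,3]\cup\{0\}\cup\{3\cdot 4^{-k}\ :\ k\ge 1\},\]
which is compact with $\lambda(Q)=1$. For $k\ge 2$ let $y=3\cdot 4^{-k}$ and $r=2\cdot 4^{-k}$, so $B_r(y)=[4^{-k},5\cdot 4^{-k}]$. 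The neighbours of $y$ in $Q$ are $0.75\cdot 4^{-k}$ and $12\cdot 4^{-k}$, both outside $B_r(y)$, hence $B_r(y)\cap Q=\{y\}$. The set of points of $Q^r$ whose nearest point of $Q$ is $y$ is the interval $(1.875\cdot 4^{-k},\,5\cdot 4^{-k}]$: it is clipped on the left at the midpoint between $y$ and its left neighbour (beyond which mass projects to $0.75\cdot4^{-k}\notin B_r(y)$), and on the right at $y+r$ (beyond which points are not in $Q^r$). Thus $\lambda\bigl(\pi_Q^{-1}(B_r(y))\bigr)=3.125\cdot 4^{-k}<4\cdot 4^{-k}=\lambda(B_r)$. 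Since this occurs for every $k$, along $r_k=2\cdot4^{-k}\to0$ your strategy guarantees only about $0.78\,\lambda(B_r)/\lambda(Q^r)$, so the bracketing bounds are not asymptotically tight for this $Q$. (In $\R^n$ the loss can approach $2^{-n}$: an isolated point of $Q$ surrounded by other points of $Q$ at distance just above $r$ has a Voronoi cell of radius barely above $r/2$.) The underlying phenomenon is that wherever $Q$ is thin at scale $r$, the fibres of $\pi_Q$ over $B_r(y)$ are truncated by competing pieces of $Q$ before reaching length $r$; your two easy cases ($B_r(y)\subset\mathrm{int}\,Q$, or $Q$ convex) are precisely the ones where this cannot happen.

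The paper circumvents exactly this obstruction by never projecting onto $Q$: it plays a strategy supported on the inflation $Q_\varepsilon=Q+B^2_\varepsilon(0)$ with $\varepsilon$ fixed \emph{independently of} $r$, so that for $r\ll\varepsilon$ the set $Q_\varepsilon$ has no thin parts; instead of folding, it boosts the density on the boundary layer $Q_\varepsilon\setminus I^\varepsilon(r)$ by the factor $\lambda(B_r)/\lambda^\varepsilon_{\min}(r)$, controls that factor via lemma \ref{lemma_intersection}, transfers the guarantee back to $Q$ through the monotonicity $V_{Q_\varepsilon}(r)\le V_Q(r)$, and only then lets $r\to0$ followed by $\varepsilon\to0$. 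Your closing suggestion of approximating $Q$ from outside by unions of balls is the right instinct, but it must be deployed in this way --- play the strategy \emph{on the approximant} and invoke monotonicity of the value in the search space --- not as a device for proving the key inequality for $Q$ itself, which the example above shows no limiting argument can rescue.
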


The proof of \cref{theorem_equivalent_static} is provided in the \hyperlink{app:appendix_patrol}{Appendix of Chapter 4}. A consequence of \cref{theorem_equivalent_static} is that for a compact set $Q$ included in $\R^n$ such that $\lambda(Q)>0$, $V_Q(r)\sim r^n \frac{\lambda(B_1)}{\lambda(Q)}$ as $r$ goes to $0$. When $\lambda(Q) = 0$, it is not always the case that $V_Q$ admits an equivalent of the form $Mr^\alpha$, with $\alpha$ and $M$ positive, as $r$ goes to $0$, as it is shown in \cref{example_cantor}.

\begin{example}
\label{example_cantor}
Let $Q\subset [0,1]$ be the following Cantor-type set.
Define $C_0 = [0,1]$, and for all $n\in \N^{\ast}$ $C_n = \frac{1}{4}C_{n-1} \cup \left(\frac{3}{4}+\frac{1}{4}C_{n-1}\right)$. Finally, let $Q = \bigcap\limits_{n\in \N} C_n$. $Q$ is compact and $\lambda(Q) = 0$.

The value of the hiding game played on $Q$ is given by the following formula:
\[V_Q(r)=\left\{
    \begin{array}{ll}
\frac{1}{2^n} &\text{ if } r \in \left[\frac{1}{2^{2n}},\frac{3}{2^{2n}}\right),\\
\frac{1}{2^{n-1}} &\text{ if } r \in \left[\frac{3}{2^{2n}},\frac{1}{2^{2(n-1)}}\right), n\in \N^\ast.
    \end{array}
\right.
\]
Indeed, let $\Sigma_1 = \{0,1\}$ and for all $n\in\N^\ast\setminus\{1\}$ let $\Sigma_n = \frac{1}{4}\Sigma_{n-1} \cup \left(\frac{3}{4}+\frac{1}{4}\Sigma_{n-1}\right)$.
For $n\in\N^{\ast}$, consider the following strategy $\sigma_n$: choose uniformly a point in $\Sigma_n$, that is with probability $\frac{1}{|\Sigma_n|} = \frac{1}{2^n}$.
Let $n\in\N^\ast$ suppose $r\in\left[\frac{1}{2^{2n}},\frac{3}{2^{2n}}\right)$. Then for all $q\in Q$ there is exactly one point $s$ in $\Sigma_n$ such that $|q-s|\leq r$. Hence $\sigma_n$ is an equalizing strategy which guarantees $\frac{1}{2^n}$ to both players.

Let $\Sigma'_1=\{\frac{1}{4}\}$ and for all $n\in\N^\ast\setminus\{1\}$ let $\Sigma'_n = \frac{1}{4}\Sigma'_{n-1} \cup \left(1-\frac{1}{4}\Sigma'_{n-1}\right)$. For $n\in\N^\ast$ consider the following strategy $\sigma'_n$: choose uniformly a point in $\Sigma'_n$, that is with probability $\frac{1}{|\Sigma'_n|} = \frac{1}{2^{n-1}}$.
Suppose now that $r \in \left[\frac{3}{2^{2n}},\frac{1}{2^{2(n-1)}}\right)$. Then for all $q\in Q$ there is exactly one point $s$ in $\Sigma'_n$ such that $|q-s|\leq r$. Hence $\sigma'_n$ is an equalizing strategy which guarantees $\frac{1}{2^{n-1}}$ to both players.

In particular, for all $n\in\N^\ast$ 
\[V_Q\left(\frac{1}{2^{2n-1}}\right)= V_Q\left(\frac{1}{2^{2n}}\right) = \frac{1}{2^n}.\]
Let $(r_n)_{n \in \N^\ast } = \left(\frac{1}{2^n}\right)_{n\in\N^\ast}$ and let $\alpha>0$. Then for all $n\in\N^\ast$
\begin{align*}
\frac{V_Q(r_{2n-1})}{(r_{2n-1})^\alpha} &= \frac{1}{2^\alpha}2^{(2\alpha-1)n} \text{ and } \frac{V_Q(r_{2n})}{(r_{2n})^\alpha} =2^{(2\alpha-1)n}.
\end{align*}
Thus we have
\[\lim_{n\to+\infty} \frac{V_Q(r_{2n-1})}{(r_{2n-1})^\alpha} = \left\{
    \begin{array}{ll}
+\infty &\text{ if } \alpha>1/2 \\
\frac{1}{\sqrt{2}} &\text{ if } \alpha = 1/2\\
0 &\text{ if } \alpha<1/2
    \end{array}
\right.
\text{ and } 
\lim\limits_{n\to+\infty} \frac{V_Q(r_{2n})}{(r_{2n})^\alpha} = 
\left\{
    \begin{array}{ll}
+\infty &\text{ if } \alpha>1/2 \\
1 &\text{ if } \alpha = 1/2\\
0 &\text{ if } \alpha<1/2.
    \end{array}
\right.
\]
Hence $r\mapsto V_Q(r)$ does not admit an equivalent of the form $r\mapsto M r^\alpha$, with $\alpha$ and $M$ positive numbers, as $r$ goes to $0$.
\end{example}

\section[Properties of the value function]{Properties of the value function of patrolling and hiding games}
\label{sec_propertiesvalue}

In this section we give some elementary properties of the function $V_Q$ for patrolling and hiding games.

\subsection{The value function of patrolling games}
\label{subsec_valuefuncpatrol}

\begin{proposition}
\label{prop_V}
Let $Q$ be a search space. The function 
\[\begin{array}{ccccc}
V_Q & : & \R_+\times \R_+ & \to & [0,1] \\
 & & (m,r) & \mapsto & V_Q(m,r) \\
\end{array}\] is
\begin{itemize}
\item[i)] non decreasing in $m$ and $r$;
\item[ii)] upper semi-continuous in $r$ for all $m$;
\item[iii)] upper semi-continuous in $m$ for all $r$.
\end{itemize}
\end{proposition}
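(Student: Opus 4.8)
The monotonicity in i) reduces to the pointwise monotonicity of the payoff. For fixed $w$ and $(y,t)$, the set $w([t,t+m])$ is non decreasing for inclusion as $m$ grows, and $\{z:d(y,z)\le r\}$ is non decreasing as $r$ grows, so $g_{m,r}(w,(y,t))$ is non decreasing in each of $m$ and $r$ separately. Since $V_Q(m,r)=\max_{\mu\in\Delta(\W)}\inf_{(y,t)\in\A}\int_\W g_{m,r}\,d\mu$, and integrating against $\mu$, then taking an infimum over $(y,t)$, then a supremum over $\mu$ all preserve pointwise inequalities, the monotonicity of $V_Q$ follows immediately.

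For ii) and iii) I would first record that a non decreasing real function is upper semi-continuous at a point if and only if it is right-continuous there, so in view of i) it suffices to prove $\lim_{r\downarrow r_0}V_Q(m,r)\le V_Q(m,r_0)$ and $\lim_{m\downarrow m_0}V_Q(m,r)\le V_Q(m_0,r)$. Both are instances of the same argument, which I describe for ii). Fix $\varepsilon>0$ and let $\nu\in\Delta_f(\A)$, with finite support $\{(y_1,t_1),\dots,(y_k,t_k)\}$ and weights $p_1,\dots,p_k$, be an $\varepsilon$-optimal strategy of the attacker at $(m,r_0)$ provided by proposition \ref{prop_existence_value}, so that $\sup_{w\in\W}\sum_i p_i\, g_{m,r_0}(w,(y_i,t_i))\le V_Q(m,r_0)+\varepsilon$. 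By weak duality, for every radius $r$ one has $V_Q(m,r)\le\sup_{w\in\W}\sum_i p_i\, g_{m,r}(w,(y_i,t_i))=:M(r)$, so it is enough to show that $M$ is right-continuous at $r_0$.

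The key point is that, although each $g_{m,r}(\cdot,(y_i,t_i))$ is a discontinuous indicator, the functional $f_i(w)=d(y_i,w([t_i,t_i+m]))$ is continuous on $\W$: uniform convergence of walks on $[t_i,t_i+m]$ forces Hausdorff convergence of the images, and the distance to a fixed point is $1$-Lipschitz for the Hausdorff metric. Now take $r_n\downarrow r_0$ and, using that $\W$ is compact for the topology of compact convergence (as established in the proof of proposition \ref{prop_existence_value}), walks $w_n$ attaining $M(r_n)$ (the supremum is a maximum, since $\sum_i p_i\mathbf 1[f_i(\cdot)\le r]$ takes finitely many values) with, after extraction, $w_n\to w^\ast$. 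For each $i$, if $f_i(w_n)\le r_n$ along a subsequence, then $f_i(w^\ast)=\lim f_i(w_n)\le r_0$ by continuity, whence $\limsup_n\mathbf 1[f_i(w_n)\le r_n]\le\mathbf 1[f_i(w^\ast)\le r_0]$. Summing with the weights $p_i$ gives $\lim_{r\downarrow r_0}M(r)=\lim_n M(r_n)\le\sum_i p_i\mathbf 1[f_i(w^\ast)\le r_0]\le M(r_0)$, and letting $\varepsilon\to0$ yields $\lim_{r\downarrow r_0}V_Q(m,r)\le V_Q(m,r_0)$. For iii) I would run the identical argument with $f_i(w)=d(y_i,w([t_i,t_i+m]))$ viewed as a function of $m$; the only extra verification is that $d(y_i,w_n([t_i,t_i+m_n]))\to d(y_i,w^\ast([t_i,t_i+m_0]))$ when $w_n\to w^\ast$ and $m_n\downarrow m_0$, which follows from Hausdorff convergence of the images, using the $1$-Lipschitz property of the walks to control the point $w_n(t_i+m_0)$ against the shrinking endpoint $t_i+m_n$.

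The main obstacle is precisely the discontinuity of $g_{m,r}$: a priori the supremum over the infinite set $\W$ need not depend continuously on the parameter, and indeed the value can genuinely jump, as the Cantor set of example \ref{example_cantor} shows for the associated hiding game. What rescues right-continuity is the combination of compactness of $\W$ with the fact that the sublevel sets $\{w:f_i(w)\le r_0\}$ are closed, so that a limiting walk inherits the relevant inequalities; this semicontinuity step is the only delicate part, the rest being bookkeeping with the finitely many atoms of $\nu$.
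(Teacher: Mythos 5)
Your proof is correct, but for parts ii) and iii) it argues from the opposite side of the game compared to the paper. The paper's proof fixes an optimal patroller strategy $\mu^\ast$, notes that $r\mapsto g_{m,r}(w,(y,t))$ is upper semi-continuous for each fixed $w$ (indicator of a closed set), applies Fatou's lemma to get upper semi-continuity of $r\mapsto\int_\W g_{m,r}(w,(y,t))\,d\mu^\ast(w)$ for each attack, and then takes the infimum over $(y,t)$. You instead dominate $V_Q(m,\cdot)$ by $M(r)=\max_{w\in\W}\int_\A g_{m,r}\,d\nu$ for a finite-support $\varepsilon$-optimal attacker strategy $\nu$ at the reference radius, and establish right-continuity of $M$ at $r_0$ by compactness of $\W$ combined with the continuity of $w\mapsto d(y_i,w([t_i,t_i+m]))$; your reduction of upper semi-continuity to right-continuity via monotonicity, and your treatment of i), match the paper (which calls i) ``direct''). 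Your longer route buys something real: by weak duality $V_Q(m,\cdot)$ is an infimum over $\nu$ and $\varepsilon$ of the functions $M$ you analyse, and an infimum of upper semi-continuous functions is upper semi-continuous, whereas $V_Q(m,\cdot)$ is a \emph{supremum} over $\mu$ of the functions the paper analyses, and the identification of $V_Q(m,\cdot)$ with the single map $r\mapsto\inf_{(y,t)}\int_\W g_{m,r}\,d\mu^\ast$ is only guaranteed at the radius for which $\mu^\ast$ is optimal. So your argument is, if anything, the more robust of the two; the only points worth making explicit in a final write-up are the continuity of $w\mapsto d(y,w([t,t+m]))$ on $\W$ (which you sketch correctly via Hausdorff convergence of images) and, for iii), the bound $d_H\bigl(w([t,t+m_n]),w([t,t+m_0])\bigr)\leq m_n-m_0$ coming from the $1$-Lipschitz property.
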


\begin{proof}
Since i) is direct we only prove ii).
For all $(m,r)\in\R^2_+$,
\begin{align*}
V_Q(m,r) &= \max_{\mu\in \Delta(\W)} \inf_{(y,t)\in \A} \int_\W g_{m,r}(w,(y,t)) d\mu(w)\\
&= \inf_{(y,t)\in \A} \int_\W g_{m,r}(w,(y,t)) d\mu^\ast(w),
\end{align*}
where $\mu^\ast\in\Delta(\W)$ is an optimal strategy of the patroller.
Let $(y,t)\in \A$ and $m\geq 0$. For all $w\in \W$, the function $r\mapsto g_{m,r}(w,(y,t))$ is upper semi-continuous, as the indicator function of a closed set. Let $r_n\to r$, then by Fatou's lemma,
\begin{align*}
\limsup_n \int_W g_{m,r_n}(w,(y,t)) d\mu^\ast(w) &\leq \int_W \limsup_n g_{m,r_n}(w,(y,t)) d\mu^\ast(w)\\
&\leq \int_\W g_{m,r}(w,(y,t)) d\mu^\ast(w).
\end{align*}
Thus the function $r\mapsto \int_\W g_{m,r}(w,(y,t)) d\mu^\ast(w)$ is upper semi-continuous.
Hence \[V_Q(m,\cdot) : r\mapsto \inf_{(y,t)\in \A} \int_\W g_{m,r}(w,(y,t)) d\mu^\ast(w)\] is upper semi-continuous.

Since for all $w\in \W$, the function $m\mapsto g_{m,r}(w,(y,t))$ is upper semi-continuous, as the indicator function of a closed set, the proof of iii) is strictly analogous.
\end{proof}

\cref{example_unitinterval} in the next section shows that in general, for fixed $m$, $V_Q(\cdot,m)$ is not lower semi-continuous.

\begin{remark}
Let $m,r \geq 0$, and $Q_1, Q_2$ be two search spaces, it is clear that if $Q_1\subset Q_2$ then the attacker is better off in $Q_2$ hence $V_{Q_1}(m,r) \geq V_{Q_2}(m,r)$.
\end{remark}

\subsection{The value function of hiding games}
\label{subsec_valuefunchid}

Recall that the value of a hiding game is equal to the value of a patrolling game with time limit $m$ equal to $0$. Hence, the negative results presented in this section also hold for patrolling games when $m=0$.

The following simple example of a hiding game on the unit interval was first solved by \cite{ruckle1983}. It shows that in general, $V_Q$ is not lower semi-continuous.

\begin{example}
\label{example_unitinterval}
Let $Q$ be the $[0,1]$ interval, then 
\[V_Q(r) = 
\sigma(s,i) = \left\{
    \begin{array}{ll}
\min\left(\lceil\frac{1}{2r}\rceil^{-1},1\right) &\text{ if } r>0\\
0 &\text{ otherwise}.
    \end{array}
\right.
\]
Indeed, it is clear when $r$ equals $0$ and $r\geq 1/2$. Let $n\in \N^\ast$ and suppose $r \in \left[\frac{1}{2(n+1)},\frac{1}{2n}\right)$. Then the patroller guarantees $\frac{1}{n+1}$ by choosing equiprobably a point in $\left\lbrace\frac{1+2k}{2(n+1)}\right\rbrace_{0\leq k\leq n}$. And the attacker, choosing equiprobably a point in $\left\lbrace \frac{(2+\varepsilon) k }{2(n+1)} \right\rbrace_{0\leq k \leq n}$, with $0< \varepsilon \leq 2/n$, also guarantees $\frac{1}{n+1}$.
\end{example}

The next proposition disproves the somehow intuitive belief that the value of hiding games is continuous with respect to the Haussdorff metric between nonempty compact sets.

\begin{proposition}
\label{prop_V_Q}
Let $r\geq 0$. The function which maps any search space $Q$ to $V_Q(r)$ is in general not continuous with respect to the Hausdorff metric between nonempty compact sets.
\end{proposition}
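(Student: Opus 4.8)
The plan is to disprove continuity by exhibiting an explicit sequence of two-point compact sets that converges in the Hausdorff metric while its value stays bounded away from the value of the limit. The mechanism I would exploit is the discontinuity of the payoff $h_r$ at distance exactly $r$: since detection occurs on the \emph{closed} ball $B_r$, a pair of points at distance exactly $r$ is always detected, whereas pushing them infinitesimally apart destroys detection, and I expect this jump to survive at the level of the value.

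Fix $r\geq 0$ and work in $\R$ (embedded in $\R^n$ if one wants the statement there). Set $Q=\{0,r\}$, which degenerates to the singleton $\{0\}$ when $r=0$, and for $\varepsilon>0$ set $Q_\varepsilon=\{0,r+\varepsilon\}$. First I would record the two payoff matrices of these finite hiding games. For $Q$, every pair of points is at distance at most $r$, so $h_r\equiv 1$ on $Q\times Q$ and the searcher wins with probability one: $V_Q(r)=1$. For $Q_\varepsilon$, the two points are at distance $r+\varepsilon>r$, so $h_r(x,y)=1$ precisely when $x=y$; the game is then the $2\times 2$ identity game, whose value is $1/2$, attained by either player choosing uniformly between the two points. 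Hence $V_{Q_\varepsilon}(r)=1/2$ for every $\varepsilon>0$.

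It then remains to verify the Hausdorff convergence $Q_\varepsilon\to Q$, which is immediate: the two sets share the point $0$ and differ only in their second point, so their Hausdorff distance is exactly $\varepsilon$, which tends to $0$. Since $V_{Q_\varepsilon}(r)=1/2$ does not converge to $V_Q(r)=1$, the map $Q\mapsto V_Q(r)$ is discontinuous at $Q$ with respect to the Hausdorff metric, and this holds for every $r\geq 0$.

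There is essentially no analytic obstacle in this argument; the only points demanding care are, first, that the construction remains valid in the degenerate case $r=0$ (where $Q=\{0\}$ gives $V_Q(0)=1$ while $V_{Q_\varepsilon}(0)=1/2$), and second, that one should emphasize the phenomenon is genuinely driven by the closedness of the detection ball: were detection to occur only on the open ball, the two configurations would share the same value and the argument would collapse. The existence of a value for each of these finite games is guaranteed by proposition \ref{prop_existence_value_hide} and, these being finite matrix games, by the minimax theorem.
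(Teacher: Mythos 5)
Your proof is correct, but it takes a genuinely different route from the paper. The paper's proof invokes Danskin's solution of the cookie-cutter game on the Euclidean disc $D_s$ with $r=1$, noting that $V_{D_s}(1)=\frac{1}{\pi}\arcsin(1/s)\to \frac12$ as $s\downarrow 1$ while $V_{D_1}(1)=1$, so continuity fails along the family of discs. Your counterexample replaces this with the two-point sets $Q=\{0,r\}$ and $Q_\varepsilon=\{0,r+\varepsilon\}$: the former gives the all-ones payoff matrix (value $1$) because detection occurs on the \emph{closed} ball, the latter gives the $2\times 2$ identity game (value $1/2$), and the Hausdorff distance is $\varepsilon$. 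All of this checks out --- finite sets are admissible search spaces in this paper (cf.\ the five-point example in section \ref{subsec_equilizing}), the Hausdorff distance computation is right, and the degenerate case $r=0$ is handled. What each approach buys: yours is entirely elementary and self-contained, needs no external result, and works uniformly for every $r\geq 0$; the paper's relies on a nontrivial computation from the literature but demonstrates that the discontinuity is not an artifact of degenerate finite configurations --- it already occurs along a family of full-dimensional convex bodies, which is a stronger and more instructive failure of continuity. Both are complete proofs of the stated proposition, since a single counterexample suffices.
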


\begin{proof}
Let $D_s = \{x\in\R^2 \ | \ \eucl{x}\leq s\}$ be the Euclidean disc of radius $s>0$ centered at $0$. From \cite{danskin1990}, it is known that
\[V_{D_s}(1) =\left\{
    \begin{array}{ll}
       1 &\text{ if } s \in [0,1]\\
\frac{1}{\pi}\arcsin\left(\frac{1}{s}\right) &\text{ if } s\in\left(1,\sqrt{2}\right].
    \end{array}
\right.\]
Hence $\lim_{s \to 1, s>1} V_{D_s}(1) = \frac{1}{2} < 1.$

The intuition is the following: it is clear that when $s$ equals $1$ the searcher guarantees $1$ by playing $x=(0,0)$. Suppose now that $s$ equals $1+\varepsilon$. Then the searcher covers almost all the area of the disc but less than half of its circumference. Hence the hider guarantees $1/2$ by choosing uniformly a point on the boundary of $D_s$.
\end{proof}

\section*{Appendix: omitted proofs}
\sectionmark{Appendix of Chapter 4}
\subsection*{Omitted proofs of \cref{subsec_valuepatrol}}

Let us first define a metric $D$ on the set $\W$ inducing the topology of compact convergence. For $n\in \N$, define $K_n=[0,n]$. Then 
\[\begin{array}{ccccc}
D & : & \W\times \W & \to & \R_+ \\
 & & (f,g) & \mapsto & \sum\limits_{n=1}^\infty \frac{1}{2^n}\sup\limits_{x\in K_n} \Vert f(x)-g(x)\Vert,
\end{array}\]
is a metric on $\W$, which induces the topology of compact convergence.

We recall the following fact about the topology of compact convergence. %Munkres page 283

\begin{proposition}[Application of Theorem 46.2 in \citep{munkres2000}]
Let $Q$ be a search space. A sequence $f_n : \R_+ \to Q$ of functions converges to the function $f$ in the topology of compact convergence if and only if for each compact subspace $K$ of $\R_+$, the sequence $f_n|_K$ converges uniformly to $f|_K$.
\end{proposition}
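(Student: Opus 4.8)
The plan is to work directly from the definition of the topology of compact convergence, for which this statement is essentially a matter of unwinding definitions. Recall that on the set of functions $\R_+ \to Q$ this topology has as a basis the collection of ``tubes''
\[B_K(f,\varepsilon) = \left\{ g \ \middle| \ \sup_{x\in K} \Vert f(x) - g(x)\Vert < \varepsilon \right\},\]
indexed by $f$, compact $K \subseteq \R_+$, and $\varepsilon > 0$; these form a basis because the intersection $B_{K_1}(f,\varepsilon_1)\cap B_{K_2}(f,\varepsilon_2)$ contains $B_{K_1\cup K_2}(f,\min(\varepsilon_1,\varepsilon_2))$, with $K_1\cup K_2$ again compact. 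By definition, $g\in B_K(f,\varepsilon)$ is exactly the statement that $g$ is uniformly $\varepsilon$-close to $f$ on $K$, so the whole result comes from matching these two formulations.

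For the forward implication I would fix a compact $K\subseteq\R_+$ and $\varepsilon>0$. Since $B_K(f,\varepsilon)$ is an open neighbourhood of $f$ and $f_n\to f$, there is $N$ with $f_n\in B_K(f,\varepsilon)$ for all $n\geq N$; that is precisely $\sup_{x\in K}\Vert f_n(x)-f(x)\Vert<\varepsilon$ for $n\geq N$, i.e. $f_n|_K\to f|_K$ uniformly. For the converse I would take an arbitrary open $U\ni f$ and choose a basic tube $B_K(f,\varepsilon)\subseteq U$. Uniform convergence of $f_n|_K$ to $f|_K$ furnishes an $N$ with $\sup_{x\in K}\Vert f_n(x)-f(x)\Vert<\varepsilon$, hence $f_n\in B_K(f,\varepsilon)\subseteq U$ for $n\geq N$; since $U$ was an arbitrary neighbourhood of $f$, this is convergence in the topology of compact convergence.

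I do not expect a genuine obstacle here, as the argument only reorganises the definitions; the only points requiring care are purely formal, namely that $Q\subseteq\R^n$ is metric so the suprema above are well defined, and that the tubes form a basis (closed under the finite intersections used above). The one substantive remark for the rest of the paper is that one may restrict to the cofinal family $K_n=[0,n]$: every compact $K\subseteq\R_+$ is bounded, hence contained in some $[0,n]$, so uniform convergence on all compacts is equivalent to uniform convergence on each $K_n$. Since $Q$ is compact the suprema $\sup_{x\in K_n}\Vert f(x)-g(x)\Vert$ are bounded by $\mathrm{diam}(Q)$, the series defining $D$ converges, and convergence in $D$ is equivalent (for sequences) to termwise convergence of these suprema; this is exactly what links the metric $D$ to the topology of compact convergence.
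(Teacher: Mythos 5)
Your argument is correct: the paper does not prove this statement itself but invokes it as Theorem 46.2 of \cite{munkres2000}, and what you have written is precisely the standard proof of that theorem, unwinding the basis of tubes $B_K(f,\varepsilon)$ in both directions. Your closing remark about the cofinal family $K_n=[0,n]$ is also apt, since that is exactly what makes the metric $D$ defined in the paper's proof of proposition \ref{prop_existence_value} induce the topology of compact convergence.
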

The following corollary follows from Sion's theorem, \cite{sion1958}. 

\begin{corollary}[Proposition A.10 in \citep{Sor:Springer2002}] 
\label{cor_sion}
Let $(X,Y,g)$ be a zero-sum game, i.e., $X$ and $Y$ are the action sets of player 1 and 2 respectively and $g$ is the payoff to player 1, such that: X is a compact metric space, for all $y\in Y$, the function $g(\cdot,y)$ is upper semi-continuous. Then the game $(\Delta(X),\Delta_f(Y),g)$ has a value and player 1 has an optimal strategy.
\end{corollary}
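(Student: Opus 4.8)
The plan is to deduce the minimax equality from Sion's theorem applied to finite subgames, and then to pass to the limit over the directed family of finite subsets of $Y$ by a compactness argument. Throughout I write $\phi_y(\mu)=\int_X g(x,y)\,d\mu(x)$ for $\mu\in\Delta(X)$ and $y\in Y$, and extend $g$ bilinearly to mixed strategies by setting $\bar g(\mu,\nu)=\sum_{y}\nu(y)\phi_y(\mu)$ for $\nu\in\Delta_f(Y)$ (a finite sum). Since in our setting the payoff $g$ takes values in $\{0,1\}$ and is in particular bounded, all these quantities are well defined, and boundedness is what makes the upper semi-continuous integrands amenable to the approximation used below.

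First I would record two preliminary facts. Since $X$ is a compact metric space, $C(X)$ is separable, so $\Delta(X)$ is a convex, weak-$\ast$ compact, metrizable subset of $C(X)^\ast$. Second, for each fixed $y$ the map $\phi_y$ is upper semi-continuous and affine on $\Delta(X)$: a bounded upper semi-continuous function on a metric space is a pointwise decreasing limit of bounded continuous functions $g_n$, so $\phi_y(\mu)=\inf_n\int_X g_n\,d\mu$ is an infimum of weak-$\ast$ continuous affine functions of $\mu$, hence upper semi-continuous. These are the only two ingredients beyond Sion's theorem that the whole argument needs.

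Next, fix a finite set $F\subseteq Y$ and apply Sion's theorem to the game with strategy spaces $\Delta(X)$ and the simplex $\Delta(F)$. Both are compact and convex; $\bar g(\cdot,\nu)$ is affine (hence quasi-concave) and upper semi-continuous, being a nonnegative combination of the $\phi_y$, while $\bar g(\mu,\cdot)$ is affine, hence continuous, on the finite-dimensional $\Delta(F)$. Sion's theorem then gives $v_F:=\max_{\mu}\min_{\nu\in\Delta(F)}\bar g(\mu,\nu)=\inf_{\nu\in\Delta(F)}\sup_{\mu}\bar g(\mu,\nu)$, and since the minimum of an affine function over a simplex is attained at a vertex, $\min_{\nu\in\Delta(F)}\bar g(\mu,\nu)=\min_{y\in F}\phi_y(\mu)$. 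Because every $\nu\in\Delta_f(Y)$ has support in some finite $F$, this yields $\bar v:=\inf_{\nu\in\Delta_f(Y)}\sup_{\mu}\bar g(\mu,\nu)=\inf_F v_F$, the infimum running over all finite $F\subseteq Y$; the attainment of $\max_\mu$ on the left uses upper semi-continuity on the compact $\Delta(X)$.

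The main step, and the point I expect to be the real obstacle, is to pass from finite $F$ to all of $Y$ while preserving an attaining strategy for player $1$. Set $v_\infty=\inf_F v_F=\bar v$ and, for each finite $F$, let $A_F=\{\mu\in\Delta(X):\phi_y(\mu)\ge v_\infty\text{ for all }y\in F\}=\bigcap_{y\in F}\{\mu:\phi_y(\mu)\ge v_\infty\}$. Each $\phi_y$ is upper semi-continuous, so its superlevel sets are closed, whence each $A_F$ is a closed, hence compact, subset of $\Delta(X)$; and $A_F\ne\emptyset$ because a maximizer of $\min_{y\in F}\phi_y$ lies in it, its value being $v_F\ge v_\infty$. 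The family $\{A_F\}$ satisfies $A_F\cap A_{F'}=A_{F\cup F'}$, so it has the finite intersection property, and compactness of $\Delta(X)$ yields $\mu^\ast\in\bigcap_F A_F$. Taking singletons $F=\{y\}$ gives $\phi_y(\mu^\ast)\ge v_\infty$ for every $y\in Y$, so $\inf_{y}\phi_y(\mu^\ast)\ge v_\infty=\bar v$. Since the reverse inequality $\sup_\mu\inf_y\phi_y(\mu)\le\bar v$ is weak duality, equality holds; the game has value $\bar v$ and $\mu^\ast$ is an optimal strategy for player $1$, which is exactly the assertion.
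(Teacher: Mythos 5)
Your proof is correct. Note, however, that the paper itself offers no proof of this statement: it is quoted verbatim as Proposition A.10 of Sorin's book, with only the remark that it ``follows from Sion's theorem,'' so there is no internal argument to compare against. What you have done is reconstruct, essentially, the standard textbook derivation of that proposition: (i) reduce to finite subsets $F\subseteq Y$, where Sion's theorem applies on the compact convex pair $\Delta(X)\times\Delta(F)$ because $\phi_y(\mu)=\int_X g(x,y)\,d\mu(x)$ is affine and weak-$\ast$ upper semi-continuous (your inf-of-continuous-affine-functions argument is the right one, and the identification of $\min_{\nu\in\Delta(F)}$ with $\min_{y\in F}$ at a vertex is sound); (ii) glue the finite subgames together via the finite intersection property of the compact superlevel sets $A_F=\bigcap_{y\in F}\{\mu:\phi_y(\mu)\ge v_\infty\}$, which simultaneously produces the value and an optimal $\mu^\ast$ for player 1. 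Each step checks out, including the attainment of the max in $v_F$ (u.s.c. objective on compact $\Delta(X)$) and the weak-duality closing inequality. Two minor remarks: your appeal to boundedness of $g$ is an assumption not literally present in the statement, but it is harmless — upper semi-continuity on the compact $X$ gives boundedness above, which is all the monotone-convergence step needs, and in the paper's application $g$ is $\{0,1\}$-valued anyway; and your identity $\inf_{\nu\in\Delta_f(Y)}\bar g(\mu,\nu)=\inf_{y\in Y}\phi_y(\mu)$, used implicitly at the end, deserves the one-line justification that an affine function's infimum over $\Delta_f(Y)$ equals its infimum over the vertices. Neither affects correctness; your argument would serve as a self-contained substitute for the citation.
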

%Munkres page 290
We are now able to complete the proof of \cref{prop_existence_value}.
\begin{proof}[Proof of \cref{prop_existence_value}]
By Ascoli's theorem, see \citep[Theorem 47.1]{munkres2000}, $\W$ is compact for the topology of compact convergence.
Moreover, for all $(y,t)\in \A$ the function $g_{m,r}(\cdot,(y,t))$ is upper semi-continuous.
The conclusion follows from \cref{cor_sion}. 
\end{proof}

\subsection*{Omitted proofs of \cref{subsec_network}}

\begin{proof}[Proof of \cref{lem_tour}]
Let $\pi = (u_1,u_2,\dots,u_{n-1},u_n)$, $u_1=u_n=u\in V$ be an Eulerian tour. Without loss of generality, suppose $l(u_i,u_{i+1})\neq 0$ for all $i\in \{1,\dots,n-1\}$. The parametrization is constructed in the following way.

If $t\in [0,l(u_1,u_2)]$ then \[w(t) = \left(u_1,u_2,\frac{t}{l(u_1,u_2)}\right).\]
Else, suppose $n\geq 3$. For all $k\in \{2,\dots,n-1\}$ if \[t\in \left(\sum_{i=1}^{k-1} l(u_i,u_{i+1}),\sum_{i=1}^{k} l(u_i,u_{i+1})\right]\] then
\[w(t)=\left(u_k,u_{k+1},\frac{t-\sum_{i=1}^{k-1} l(u_i,u_{i+1})}{l(u_k,u_{k+1})}\right).\]
It is not difficult to verify that such $w$ is appropriate.
\end{proof}

\begin{proof}[Proof of \cref{th_euleriannetwork}]
If $m\geq \lambda(\n)$, the patroller guarantees 1 by playing a parametrization of $\n$. Suppose that $m<\lambda(\n)$. Let $(y,t)\in \n\times \R_+$ be a pure strategy of the attacker and let $w$ be as in \cref{def_param_network}. There exists $t_y\in [0,\lambda(\n)]$ such that $w(t_y) = y$. Now let $t_0\in [t_y-t-m,t_y-t]$. Then $w_{t_0}(t_y-t_0) = w(t_y) = y$. And $t_y-t_0 \in [t,t+m]$. Thus $y\in w_{t_0}([t,t+m]).$ Hence under the patroller's uniform strategy
\begin{align*}
\Proba(y\in w_{t_0}([t,t+m]))&\geq \Proba(t_0\in[t_y-t-m,t_y-t])=\frac{m}{\lambda(\n)}.
\end{align*}
The other inequality follows from \cref{proposition_upperbound} since in this case, $\rho$ equals $1$.
\end{proof}

\subsection*{Omitted proofs of \cref{subsec_simplesearch}}

To prove \cref{thm_simplesearchspace} we first need some preliminary definition and lemmas.

\begin{definition}
Let $Q$ be a search space. A continuous function $L:[0,1]\to Q$ such that $L(0)=L(1)$ is called an $r$-tour if for any $x\in Q$ there exists $l\in L([0,1])$ such that $d(x,l)\leq r$.
\end{definition}

The next lemma shows that when the radius of detection $r$ is small, one can find in $Q$ an $r$-tour with length not exceeding $\lambda(Q)/2r$, up to some $\varepsilon$.

\begin{lemma}[Lemma 3.39 in \citep{AlpGal:Kluwer2003}]
\label{lemma_rtour}
Let $Q\subset \R^2$ be a simple search space. Endow $Q$ with the Euclidean norm. Then for any $\varepsilon>0$ there exits $r_\varepsilon>0$ such that for any $r<r_\varepsilon$ there exists an $r$-tour $L:[0,1]\to Q$ such that \[TV(L) \leq (1+\varepsilon)\frac{\lambda(Q)}{2r}.\]
\end{lemma}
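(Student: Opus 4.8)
The plan is to construct $L$ explicitly as a \emph{boustrophedon} (lawn-mower) path together with the boundary of $Q$, and then estimate its length by comparing the vertical part to a Riemann sum. First I would reduce to a single elementary search space $S=\{(x,t)\in[0,a]\times\R : f_2(x)\le t\le f_1(x)\}$. Since $Q$ is a finite union of such pieces with pairwise disjoint interiors and is path-connected, one can splice the tours built for the individual pieces using finitely many fixed connecting arcs, whose total length is a constant independent of $r$ and hence negligible against $\lambda(Q)/2r$; moreover the pieces meet only along their boundaries, which are graphs of continuous bounded-variation functions (hence Lebesgue-null in $\R^2$), so $\lambda(Q)=\sum_j\lambda(S_j)$.

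For a single $S$, I would place vertical sweep lines at abscissae $x_k=(2k+1)r$ (for the finitely many $k\geq 0$ with $x_k\le a$), each running over the full chord $\{x_k\}\times[f_2(x_k),f_1(x_k)]\subset S$, so that the strips $[x_k-r,x_k+r]$ cover $[0,a]$. Consecutive sweeps are joined by following the graph of $f_1$ or of $f_2$ alternately, and I would adjoin the entire boundary $\partial S$ to the path. The whole curve stays in $S$, is closed after one closing arc along $\partial S$, and reparametrizes to a map $L:[0,1]\to S$ with $L(0)=L(1)$; its total variation $TV(L)$ equals the total arc length of the constructed curve.

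The covering property $d\big((x,t),L([0,1])\big)\le r$ looks like the delicate step but is in fact clean. Given $(x,t)\in S$, let $x_k$ be the nearest abscissa, so $|x-x_k|\le r$. If $f_2(x_k)\le t\le f_1(x_k)$, then $(x_k,t)$ lies on the $k$-th sweep, at distance $|x-x_k|\le r$. Otherwise, say $t>f_1(x_k)$; since $t\le f_1(x)$ and $f_1$ is continuous, the intermediate value theorem produces $x^\ast$ between $x$ and $x_k$ with $f_1(x^\ast)=t$, so $(x^\ast,t)\in\partial S$ lies on the path and $\eucl{(x,t)-(x^\ast,t)}=|x-x^\ast|\le|x-x_k|\le r$. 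The case $t<f_2(x_k)$ is symmetric, using $f_2$. Including $\partial S$ in the path is precisely what makes this work, and it sidesteps any need to control the modulus of continuity of $f_1,f_2$ near narrow features.

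It remains to bound $TV(L)$. The vertical part contributes $\sum_k\big(f_1(x_k)-f_2(x_k)\big)$; since $f_1-f_2$ is continuous on $[0,a]$, the quantity $2r\sum_k\big(f_1(x_k)-f_2(x_k)\big)$ is a Riemann sum of mesh $2r$ for $\int_0^a(f_1-f_2)=\lambda(S)$, hence tends to $\lambda(S)$, giving $\sum_k\big(f_1(x_k)-f_2(x_k)\big)\le(1+\varepsilon/2)\lambda(S)/2r$ for $r$ small. The connecting arcs lie on the graphs of $f_1$ and $f_2$ over pairwise disjoint subintervals, and the arc length of the graph of a bounded-variation function $f$ over an interval $I$ is at most $|I|+TV(f|_I)$; summing, the connectors contribute at most $a+TV(f_1)+TV(f_2)$, a constant, and likewise for the closing arc. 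Thus $TV(L)\le(1+\varepsilon/2)\lambda(S)/2r+C_S$, and since $\lambda(S)/2r\to\infty$ the constant $C_S$ is absorbed into the factor $(1+\varepsilon)$ once $r<r_\varepsilon$. Summing over the finitely many pieces and their connectors yields the bound for $Q$. The main obstacle is therefore not the length bookkeeping but guaranteeing coverage in the presence of narrow spikes of $\partial Q$, which the intermediate-value argument combined with boundary inclusion resolves.
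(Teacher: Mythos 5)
The paper does not actually prove this lemma: it is imported as Lemma 3.39 of \cite{alperngal2003}, so there is no in-paper argument to compare yours against. Your construction is essentially the classical one from that literature: parallel sweep segments spaced $2r$ apart whose total length is a midpoint Riemann sum for $\int_0^a(f_1-f_2)\,dx=\lambda(S)$, joined boustrophedon-style along the graphs of $f_1$ and $f_2$, plus a traversal of $\partial S$, with every connector and boundary contribution bounded by a constant in $r$ and therefore absorbed into the factor $(1+\varepsilon)$ because $\lambda(S)>0$ (this positivity, guaranteed by $f_1\neq f_2$ and continuity, is used implicitly and deserves a mention). The reduction to a single elementary piece is also sound: path-connectedness of $Q$ forces the intersection graph of the pieces $S_j$ to be connected, each piece is rectifiably path-connected (a vertical segment followed by an arc of a graph of a bounded-variation function), so a spanning tree yields constant-length splicing arcs inside $Q$; and $\lambda(Q)=\sum_j\lambda(S_j)$ since the overlaps lie in the boundaries, which are graphs of functions and vertical segments, hence Lebesgue-null by Fubini. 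The intermediate-value step handling $t>f_1(x_k)$ or $t<f_2(x_k)$ --- the one place where narrow spikes of $\partial Q$ would break naive coverage --- is exactly right and is what makes the proof work without any modulus-of-continuity control on $f_1,f_2$.

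Two small points need patching, neither fatal. First, with sweeps at $x_k=(2k+1)r$, $k\leq K$, the strips $[2kr,(2k+2)r]$ cover only $[0,(2K+2)r]$, which can fall short of $a$ by up to $r$ (take $a=1$, $r=0.4$: the single strip is $[0,0.8]$); so your sentence ``let $x_k$ be the nearest abscissa, so $|x-x_k|\leq r$'' fails near the right edge. The fix is already contained in your construction: the right vertical chord $\{a\}\times[f_2(a),f_1(a)]$ lies in $\partial S$ and hence on your path, so you may treat $a$ (and symmetrically $0$) as an additional sweep abscissa and run the same case analysis there; every $x\in[x_K,a]$ is within $r$ of $x_K$ or of $a$. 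Second, when closing the curve, take care that only the connectors and the boundary loop get traversed more than once (say, at most twice, via a there-and-back or a full loop); the vertical sweeps must be traversed a single time each, since doubling them would replace the leading constant $\lambda(Q)/2r$ by $\lambda(Q)/r$ and lose the factor of $2$ that the lemma is about. With these two repairs your argument is complete and matches the intended proof of the cited result.
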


The next lemma gives a parametrization of $L([0,1])$ in terms of walks.

\begin{lemma}
\label{lem_parametrization}
Let $L$ be an $r$-tour as in \cref{lemma_rtour}. Then for all $\varepsilon'>0$ there exists $w : [0,TV(L)+\varepsilon'] \to L([0,1])$ continuous such that:
\begin{itemize}
\item[i)] $w(0)= w(TV(L)+\varepsilon')$;
\item[ii)] $w$ is surjective;
\item[iii)] $w$ is $1$-Lipschitz continuous;%$\forall t_1,t_2\in [0,TV(L)]$ $TV(w|_{[t_1,t_2]}) = |t_1-t_2|$,
\item[iv)] $TV(w)= TV(L)$.
\end{itemize}
$w$ is extended to a $(TV(L)+\varepsilon')$-periodic function on $\R$, which is still denoted $w$.
\end{lemma}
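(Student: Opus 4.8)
The plan is to take the $r$-tour $L:[0,1]\to Q$ and reparametrize its image $L([0,1])$ by arc length, which will automatically give a $1$-Lipschitz map, then glue in a small return segment of length $\varepsilon'$ so as to close the loop and make the domain length exactly $TV(L)+\varepsilon'$. Since $L$ is continuous of finite total variation, it is rectifiable, and the arc-length function $s(u)=TV(L|_{[0,u]})$ is continuous and nondecreasing from $[0,1]$ onto $[0,TV(L)]$. The standard fact I would invoke is that a rectifiable curve admits a parametrization by arc length: there is a continuous map $\gamma:[0,TV(L)]\to L([0,1])$ with $\gamma(s(u))=L(u)$ for all $u$, which is $1$-Lipschitz and satisfies $TV(\gamma)=TV(L)$. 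This $\gamma$ is surjective onto $L([0,1])$ because $L$ is. The only place this definition is delicate is where $s$ is constant on an interval (i.e. $L$ pauses); there $\gamma$ is well defined because $L$ is then also constant on that interval, so the value of $\gamma$ at the common arc-length value is unambiguous.

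Next I would handle the closure requirement i) and the prescribed domain length. Note $\gamma(0)=L(0)$ and $\gamma(TV(L))=L(1)=L(0)$, so in fact $\gamma$ already returns to its start. To obtain a domain of length exactly $TV(L)+\varepsilon'$ while keeping the map $1$-Lipschitz, surjective, and with total variation unchanged, I would append a ``dwell'' segment: define
\[
w(t)=\begin{cases}\gamma(t) & t\in[0,TV(L)],\\[2pt] \gamma(0)=L(0) & t\in[TV(L),\,TV(L)+\varepsilon'].\end{cases}
\]
Then $w(0)=w(TV(L)+\varepsilon')=L(0)$, giving i); surjectivity ii) is inherited from $\gamma$; the appended constant piece is trivially $1$-Lipschitz and agrees with $\gamma$ at the junction $t=TV(L)$, so $w$ is continuous and $1$-Lipschitz, giving iii); and since the constant piece contributes nothing to the variation, $TV(w)=TV(\gamma)=TV(L)$, giving iv). Finally the extension to a $(TV(L)+\varepsilon')$-periodic map on $\R$ is immediate from i), since the endpoint values match.

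The main obstacle is the rigorous construction of the arc-length reparametrization $\gamma$ from a curve of bounded variation that need not be injective and may have flat (constant) stretches; one must check that setting $\gamma(s(u))=L(u)$ yields a well-defined single-valued function, which hinges on the observation that $s(u_1)=s(u_2)$ with $u_1<u_2$ forces $TV(L|_{[u_1,u_2]})=0$ and hence $L$ constant on $[u_1,u_2]$. Establishing the $1$-Lipschitz bound $\eucl{\gamma(s_1)-\gamma(s_2)}\le|s_1-s_2|$ then reduces to the elementary inequality $\eucl{L(u_1)-L(u_2)}\le TV(L|_{[u_1,u_2]})=|s(u_1)-s(u_2)|$, valid for any curve of bounded variation. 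Everything after obtaining $\gamma$ is routine bookkeeping, so I would concentrate the care on this reparametrization step and merely state the Lipschitz and variation estimates that flow from it.
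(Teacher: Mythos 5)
Your proof is correct, and it reaches the same goal by a slightly different device than the paper. You construct the pure arc-length parametrization $\gamma$ on $[0,TV(L)]$, handle the degenerate flat stretches of $u\mapsto TV(L|_{[0,u]})$ by observing that $L$ is constant there, and then append a dwell segment of length $\varepsilon'$ at the endpoint to stretch the domain to $TV(L)+\varepsilon'$. The paper instead sets $f(s)=TV(L|_{[0,s]})+\varepsilon' s$ and defines $w=L\circ f^{-1}$: the added linear term makes $f$ strictly increasing, hence a homeomorphism, so the well-definedness issue you flag as the main obstacle never arises, and the $\varepsilon'$ of slack is smeared uniformly over the whole tour (the walk moves at speed strictly below $1$) rather than concentrated in a terminal pause. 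The paper's trick buys a shorter argument with no case analysis; your version requires the extra (standard but nontrivial) lemma that a continuous rectifiable curve admits a well-defined arc-length reparametrization, which you correctly identify and justify via $\eucl{L(u_1)-L(u_2)}\le TV(L|_{[u_1,u_2]})$. Both constructions satisfy i)--iv) and extend periodically, so your proof is a valid alternative.
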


\begin{proof}[Proof of \cref{lem_parametrization}]
Let $\varepsilon'>0$ and let \[\begin{array}{ccccc}
f & : & [0,1] & \to & [0,TV(L)+\varepsilon'] \\
 & & s & \mapsto & TV(L|_{[0,s]})+\varepsilon' s.
\end{array}\] The function $f$ is increasing and continuous on $[0,1]$, hence $f$ is an homeomorphism. Define $w$ as $L \circ f^{-1}$ on $[0,TV(L)+\varepsilon']$.
It is not difficult to prove that such $w$ verifies the condition of the lemma.
\end{proof}

We are now able to prove \cref{thm_simplesearchspace}.

\begin{proof}[Proof of \cref{thm_simplesearchspace}]Let $L$ and $w$ be as in \cref{lemma_rtour,lem_parametrization} respectively. For all $t_0\in[0,TV(L)+\varepsilon']$ define $w_{t_0}(\cdot)$ as $w(t_0+\cdot)$.

Let $(l,t)\in L([0,1])\times \R_+$ . By \cref{lem_parametrization} ii), there exists $t_l\in [0,TV(L)+\varepsilon']$ such that $w(t_l) = l$.
Now let $t_0\in [t_l-t-m,t_l-t]$. Then $w_{t_0}(t_l-t_0) = w(t_l) = l$. And $t_l-t_0 \in [t,t+m]$. Hence $l\in w_{t_0}([t,t+m]).$

Suppose $t_0$ is chosen uniformly in $[0,TV(L)+\varepsilon']$. By \cref{lem_parametrization} iii) this is an admissible strategy for the patroller. Let $(y,t)\in \A$ be a pure strategy of the attacker. Then if $l\in L([0,1])$ is such that $d(y,l)\leq r$,
\begin{align*}
\Proba(d(y,w_{t_0}([t,t+m]))\leq r) &\geq \Proba(l \in w_{t_0}([t,t+m])),\\ 
&\geq \Proba(t_0 \in [t_l-t-m,t_l-t])\\
&= \frac{m}{TV(L)+\varepsilon'}.
\end{align*}
By \cref{lemma_rtour}, this last quantity is greater than or equal to $\frac{m}{\frac{(1+\varepsilon)\lambda(Q)}{2r}+\varepsilon'}$.
Hence the patroller guarantees $\frac{m}{\frac{(1+\varepsilon)\lambda(Q)}{2r}+\varepsilon'}$ for all $\varepsilon'>0$, that is \[V_Q(m,r)\geq \frac{2rm}{(1+\varepsilon)\lambda(Q)} \sim \frac{2rm}{\lambda(Q)}\] as $r$ goes to $0$.

In this context, \cref{proposition_upperbound} yields $V_Q(m,r)\leq \frac{2rm+\pi r^2}{\lambda(Q)}\sim \frac{2rm}{\lambda(Q)}$ as $r$ goes to $0$.
\end{proof}

\subsection*{Omitted proofs of \cref{subsec_asymptotichinding}}

To prove \cref{theorem_equivalent_static} we first need to introduce a technical lemma.

Denote $B^2_r(x) = \{y\in \R^n \ | \ \eucl{x - y} \leq r \}$ the closed ball of center $x$ with radius $r$ for the Euclidean norm, and $\partial B^2_r(x) = \{y\in \R^n \ | \ \eucl{x - y} = r \}$ the sphere of center $x$ with radius $r$ for the Eucliean norm.

The intuition behind \cref{lemma_intersection} below is the following. We consider the balls $B^2_\varepsilon(0)$ and $B^2_r(x)$ with $x$ on the boundary of $B^2_\varepsilon(0)$. When $r$ goes to zero, the ratio between the volume of the ball $B^2_r(x)$ and the ball $B^2_r(x)$ intersected with the ball $B^2_\varepsilon(0)$ goes to $2$. \cref{lemma_intersection} gives an upper bound to this ratio, as $r$ goes to $0$,  for a non necessary Euclidean ball $B_r(x)$.

\begin{lemma}
\label{lemma_intersection}
Let $\norm{\cdot}$ be a norm on $\R^n$ and $c_1,c_2>0$ be such that $c_1\norm{\cdot} \leq \eucl{\cdot} \leq c_2\norm{\cdot}$. Then for all $x\in \partial B^2_{\varepsilon}(0)$
\[\limsup_{r\to 0} \frac{\lambda(B_r)}{\lambda(B^2_{\varepsilon}(0)\cap B_r(x))} \leq 2\left(\frac{c_2}{c_1}\right)^n.\]
\end{lemma}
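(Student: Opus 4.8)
The plan is to reduce the claim about a general ball $B_r(x)$ to the Euclidean case by sandwiching the given norm between rescaled copies of the Euclidean norm using the hypothesis $c_1\norm{\cdot}\leq\eucl{\cdot}\leq c_2\norm{\cdot}$. Writing the general ball in terms of Euclidean balls, the inequalities give the inclusions $B^2_{r/c_2}(x)\subset B_r(x)\subset B^2_{r/c_1}(x)$. The first inclusion lets me bound the intersection from below, and the second lets me bound $\lambda(B_r)$ from above, so that the awkward non-Euclidean ball disappears and only the ratio of Euclidean quantities remains, at the cost of a factor that is a power of $c_2/c_1$.

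Concretely, I would first record the two inclusions above. From $B_r(x)\subset B^2_{r/c_1}(x)$ we get, by homogeneity of Lebesgue measure under scaling of balls, that $\lambda(B_r)\leq \lambda(B^2_{r/c_1}) = (c_2/c_1)^n\,\lambda(B^2_{r/c_2})$, using $\lambda(B^2_s)=s^n\lambda(B^2_1)$. From $B^2_{r/c_2}(x)\subset B_r(x)$ we get $\lambda\big(B^2_\varepsilon(0)\cap B^2_{r/c_2}(x)\big)\leq \lambda\big(B^2_\varepsilon(0)\cap B_r(x)\big)$. Combining these, the target ratio is bounded by
\[
\frac{\lambda(B_r)}{\lambda\big(B^2_\varepsilon(0)\cap B_r(x)\big)} \leq \left(\frac{c_2}{c_1}\right)^n \frac{\lambda(B^2_{r/c_2})}{\lambda\big(B^2_\varepsilon(0)\cap B^2_{r/c_2}(x)\big)}.
\]
Setting $s=r/c_2$, which tends to $0$ as $r\to 0$, it remains to show the purely Euclidean statement
\[
\limsup_{s\to 0}\frac{\lambda(B^2_s)}{\lambda\big(B^2_\varepsilon(0)\cap B^2_s(x)\big)} \leq 2,
\]
for $x\in\partial B^2_\varepsilon(0)$, after which the factor $(c_2/c_1)^n$ yields exactly the claimed bound $2(c_2/c_1)^n$.

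For the Euclidean assertion, the geometric picture described before the lemma is the guide: as the small ball $B^2_s(x)$ shrinks around a point $x$ on the sphere $\partial B^2_\varepsilon(0)$, the sphere looks locally like the hyperplane tangent at $x$, so $B^2_s(x)$ is asymptotically cut in half by the larger ball, giving intersection volume $\sim \tfrac12\lambda(B^2_s)$ and hence ratio $\to 2$. To make this rigorous I would compare $B^2_\varepsilon(0)$ near $x$ with the tangent half-space $H$ at $x$; a half-space cuts any centered small ball into exactly half its volume, so $\lambda(B^2_s(x)\cap H)=\tfrac12\lambda(B^2_s)$. The curvature of the sphere introduces only a lower-order correction, which one controls by noting that for small $s$ the ball $B^2_s(x)$ meets $B^2_\varepsilon(0)$ in at least a region containing $B^2_s(x)\cap H$ minus a set whose measure is $o(\lambda(B^2_s))$ (a spherical cap of vanishing relative thickness). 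Therefore $\lambda\big(B^2_\varepsilon(0)\cap B^2_s(x)\big)\geq \big(\tfrac12 - o(1)\big)\lambda(B^2_s)$, which gives the $\limsup\leq 2$.

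The main obstacle is making the tangent-hyperplane approximation precise, i.e.\ quantifying the discrepancy between the intersection with the curved ball $B^2_\varepsilon(0)$ and the intersection with its tangent half-space and showing this discrepancy is negligible relative to $\lambda(B^2_s)=\Theta(s^n)$. Since the sphere deviates from its tangent plane by $O(s^2)$ over a chord of length $O(s)$, the symmetric difference of the two intersections with $B^2_s(x)$ has measure $O(s^{n+1})=o(s^n)$, which is exactly the decay needed; carefully justifying this estimate (rather than merely asserting it) is where the real work lies, while the norm-comparison reduction is entirely routine.
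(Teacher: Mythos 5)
Your overall architecture matches the paper's: both proofs use the equivalence of norms to replace the non-Euclidean ball by Euclidean balls of comparable radii (this is where the factor $\left(c_2/c_1\right)^n$ comes from), and both then reduce to showing that a small Euclidean ball centred at a point of the sphere $\partial B^2_\varepsilon(0)$ has asymptotically at least half of its volume inside $B^2_\varepsilon(0)$. Where you genuinely diverge is in how that Euclidean half-volume asymptotic is established. The paper invokes an exact closed-form expression for $\lambda\bigl(B^2_\varepsilon(0)\cap B^2_r(x)\bigr)$ in terms of regularized incomplete Beta functions (citing Li, 2011) and extracts the asymptotics from that formula; you instead compare $B^2_\varepsilon(0)$ near $x$ with its supporting half-space $H$, use the reflection symmetry $\lambda\bigl(H\cap B^2_s(x)\bigr)=\tfrac12\lambda(B^2_s)$, and control the curvature correction. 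Your route is more elementary and self-contained, avoiding special functions entirely, at the price of having to justify the cap estimate; the estimate you identify is the right one (the excess region $H\cap B^2_s(x)\setminus B^2_\varepsilon(0)$ sits in the annulus $\varepsilon<\eucl{y}\leq\sqrt{\varepsilon^2+s^2}$ of radial thickness $O(s^2/\varepsilon)$, and restricted to a solid angle of size $O(s^{n-1})$ it has measure $O(s^{n+1})=o(s^n)$), and once written out it closes the argument. Two small corrections: first, your inclusions are transposed relative to the stated hypothesis $c_1\norm{\cdot}\leq\eucl{\cdot}\leq c_2\norm{\cdot}$; the correct ones are $B^2_{c_1 r}(x)\subset B_r(x)\subset B^2_{c_2 r}(x)$, not $B^2_{r/c_2}(x)\subset B_r(x)\subset B^2_{r/c_1}(x)$ (the latter can fail, e.g.\ when $c_1c_2<1$). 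This changes nothing downstream since $\lambda(B^2_{c_2 r})/\lambda(B^2_{c_1 r})=\left(c_2/c_1\right)^n$ as well, but it should be fixed. Second, note that the half-space bound only works in the direction you use it: since $B^2_\varepsilon(0)\subset H$, the intersection with the ball is \emph{at most} half of $\lambda(B^2_s)$, so the lower bound really does require the $o(s^n)$ correction term you describe rather than a pure inclusion.
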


\begin{proof}[Proof of \cref{lemma_intersection}]
Let $x\in \partial B_\varepsilon^2(0)$, let $\varepsilon>0$.
Denote $I$ the regularized incomplete Beta function: for $a,b>0$ and $0<z<1$, $I_z(a,b) = \frac{B(z;a,b)}{B(a,b)}$. Where $B(z;a,b) = \int_0^z t^{a-1}(1-t)^{b-1}dt$ and $B(a,b) = B(1;a,b)$ is the Beta function.
Then we have, see \cite{li2011},
\begin{align*}
\lambda\left(B^2_{\varepsilon}(0)\cap B^2_r(x)\right) = \frac{\pi^{n/2}}{2\Gamma(\frac{n}{2}+1)}&\Bigg(r^n I_{1-\left(\frac{r}{2\varepsilon}\right)^2}\left(\frac{n+1}{2},\frac{1}{2}\right) \\
&+ \varepsilon^n I_{\left(\frac{r}{\varepsilon}\right)^2\left(1-\left(\frac{r}{2\varepsilon}\right)^2\right)}\left(\frac{n+1}{2},\frac{1}{2}\right)\Bigg).
\end{align*}
Since $t\mapsto t^{\frac{n-1}{2}}(1-t)^{-1/2}$ is integrable over $[0,1)$,
\begin{align*}
I_{1-\left(\frac{r}{2\varepsilon}\right)^2}\left(\frac{n+1}{2},\frac{1}{2}\right) = \frac{\int_0^{1-\left(\frac{r}{2\varepsilon}\right)^2}t^\frac{n-1}{2}(1-t)^{-1/2}dt}{B(\frac{n-1}{2},\frac{1}{2})} \to 1,
\end{align*}
as $r$ goes to $0$.
And,
\begin{align*}
I_{\left(\frac{r}{\varepsilon}\right)^2\left(1-\left(\frac{r}{2\varepsilon}\right)^2\right)}\left(\frac{n+1}{2},\frac{1}{2}\right) &= \frac{\int_0^{\left(\frac{r}{\varepsilon}\right)^2\left(1-\left(\frac{r}{2\varepsilon}\right)^2\right)} t^\frac{n-1}{2}(1-t)^{-1/2}dt}{B(\frac{n-1}{2},\frac{1}{2})}
\end{align*}
which, since $1 \leq (1-t)^{-1/2}$ when $t\in[0,1)$, is greater than
\begin{align*}
\frac{\frac{2}{n+1}\left(\frac{r}{\varepsilon}\right)^{n+1}\left(1-\left(\frac{r}{2\varepsilon}\right)^2\right)^\frac{n+1}{2}}{B(\frac{n-1}{2},\frac{1}{2})} = \frac{2r^{n+1}}{(n+1)\varepsilon^{n+1} B(\frac{n-1}{2},\frac{1}{2})} + o(r^{2n+2})
\end{align*}
when $r$ goes to $0$. Hence we have
\[\lambda\left(B^2_{\varepsilon}(0)\cap B^2_r(x)\right) \geq \frac{\pi^{n/2}}{2\Gamma(\frac{n}{2}+1)}(r^n+o(r^n))\]
as $r$ goes to $0$. Moreover since
\begin{align*}
B^2_{\varepsilon}(0)\cap B_r(x) &= \{y\in \R^n \ | \ \eucl{y}\leq \varepsilon \text{ and } \norm{x-y} \leq r\} \\
&\supset \{y\in \R^n \ | \ \eucl{y}\leq \varepsilon \text{ and } \eucl{x-y} \leq c_1r\},
\end{align*}
and $B_r(0)\subset B^2_{c_2r}(0)$,
we have $\lambda\left(B^2_{\varepsilon}(0)\cap B_r(x)\right) \geq \lambda\left(B^2_{\varepsilon}(0)\cap B^2_{c_1r}(x)\right),$
and $
c_2^n\lambda(B^2_r)\geq \lambda(B_r).$
Finally, dividing by $\lambda\left(B^2_{\varepsilon}(0)\cap B_r(x)\right)$ and taking the $\limsup$, since $\lambda(B_r^2) = \frac{\pi^{n/2}r^n}{\Gamma\left(\frac{n}{2}+1\right)}$ we have
\begin{align*}
\limsup_{r\to 0}\frac{\lambda(B_r)}{\lambda\left(B^2_{\varepsilon}(0)\cap B_r(x)\right)} \leq \limsup_{r\to 0}\frac{c_2^n\lambda(B^2_r)}{\lambda\left(B^2_{\varepsilon}(0)\cap B^2_{c_1r}(x)\right)} \leq 2\left(\frac{c_2}{c_1}\right)^n. 
\end{align*}
\end{proof}

We are now able to prove \cref{theorem_equivalent_static}.

\begin{proof}[Proof of \cref{theorem_equivalent_static}]
Let $\varepsilon >0$ and $r\in (0,\varepsilon)$. We regularize the boundary of $Q$ by defining $Q_\varepsilon = Q + B^2_\varepsilon(0),$ and $I^\varepsilon (r) = \{y \in Q_\varepsilon \ | \ B_r(y) \subset Q_\varepsilon \}.$
Define as well $\lambda^\varepsilon_{\min} (r) = \min_{y\in Q_\varepsilon} \lambda (B_r(y)\cap Q_\varepsilon).$
Finally define $\mu \in \Delta(Q_\varepsilon)$ such that for all $B\subset Q_\varepsilon$ measurable 
\[\mu(B) = \frac{\lambda\left(B\cap I^\varepsilon (r)\right)\lambda^\varepsilon_{\min}(r) + \lambda(B\cap (Q_\varepsilon \setminus I^\varepsilon (r))\lambda(B_r)}{\lambda(I^\varepsilon(r))\lambda^\varepsilon_{\min}(r)+\lambda(B_r)\lambda(Q_\varepsilon\setminus I^\varepsilon(r))}.\]
Since by definition $\lambda(B_r)\geq \lambda^\varepsilon_{\min} (r)$, for all $x\in Q_\varepsilon$
\[\mu(B_r(x)\cap Q_\varepsilon)\geq \frac{\lambda^\varepsilon_{\min}(r)\lambda(B_r)}{\lambda(I^\varepsilon(r))\lambda^\varepsilon_{\min}(r)+\lambda(B_r)\lambda(Q_\varepsilon\setminus I^\varepsilon(r))}.\]
Because the hider can play in $(Q_\varepsilon,r)$ as he would play in $(Q,r)$, $V_{Q_\varepsilon}(r) \leq V_Q(r)$.
By \cref{proposition_upperbound},
\[\frac{\lambda^\varepsilon_{\min}(r)\lambda(B_r)}{\lambda(I^\varepsilon(r))\lambda^\varepsilon_{\min}(r)+\lambda(B_r)\lambda(Q_\varepsilon\setminus I^\varepsilon(r))}\leq V_{Q_\varepsilon}(r) \leq V_Q(r) \leq \frac{\lambda(B_r)}{\lambda(Q)}.\]
Dividing by $\lambda(B_r)/\lambda(Q),$
\begin{align}
\label{eq_lambdaepsiloneq}
\frac{\lambda^\varepsilon_{\min}(r)\lambda(Q)}{\lambda(I^\varepsilon(r))\lambda^\varepsilon_{\min}(r)+\lambda(B_r)\lambda(Q_\varepsilon\setminus I^\varepsilon(r))} \leq \frac{V_Q(r)\lambda(Q)}{\lambda(B_r)} \leq 1.
\end{align}

Let us show that for all
$\varepsilon >0 \ \bigcup_{r>0}I^\varepsilon (r) = \mathring{Q_\varepsilon}.$
Indeed, let $y\in \bigcup_{r>0}I^\varepsilon (r)$. There exists $r>0$ such that $y\in I^\varepsilon (r)$. Thus there exists $r>0$ such that $B_r(y) \subset Q_\varepsilon$.
Conversely, let $y\in \mathring{Q_\varepsilon}$. There exists $r'>0$ such that $B'_{r'}(y)\subset \mathring{Q_\varepsilon}$, where $B'_{r'}(y) = \{x\in \R^n \ | \ \Vert x - y\Vert < r'\}$. Take $0<r<r'$, then $B_r(y)\subset \mathring{Q_\varepsilon}$ hence $y\in I^\varepsilon(r)$.

For all $r_1,r_2>0$ such that $r_1>r_2$ one has $I^\varepsilon(r_1)\subset I^\varepsilon(r_2)$.
Hence $\lim_{r \to 0}\lambda(I^\varepsilon(r)) = \lambda(\mathring{Q_\varepsilon})$.
Dividing by $\lambda^\varepsilon_{\min}(r)$ and letting $r$ go to $0$ in \cref{eq_lambdaepsiloneq}, by \cref{lemma_intersection} one has, since the minimum in $\lambda^\varepsilon_{\min}(r)$ is reached on the boundary of a Euclidean ball,
\begin{align}
\label{eq_c1c2}
\frac{\lambda(Q)}{\lambda(\mathring{Q_\varepsilon})+2\left(\frac{c_2}{c_1}\right)^n\lambda(\partial Q_\varepsilon)} \leq \liminf\limits_{r\to 0} \frac{V_Q(r)\lambda(Q)}{\lambda(B_r)} \leq \limsup\limits_{r\to 0} \frac{V_Q(r)\lambda(Q)}{\lambda(B_r)} \leq 1.
\end{align}

Let us show that $\bigcap_{\varepsilon>0} \mathring{Q_\varepsilon} = \bigcap_{\varepsilon>0} Q_\varepsilon = Q.$
Indeed, let $y\in \bigcap_{\varepsilon > 0} Q_\varepsilon$. For all $\varepsilon > 0$ $\min_{z\in Q} \eucl{y-z}\leq \varepsilon$, hence $y\in Q$.
Conversely, for all $\varepsilon >0$ $Q\subset \mathring{Q_\varepsilon}$ hence $Q \subset \bigcap_{\varepsilon>0} \mathring{Q_\varepsilon}$.
Moreover for all $\varepsilon_1, \varepsilon_2>0$ such that $\varepsilon_1 < \varepsilon_2$ one has $Q_{\varepsilon_1} \subset Q_{\varepsilon_2}$.
Hence $\lim_{\varepsilon\to 0}\lambda(\mathring{Q_\varepsilon})= \lambda(Q)$, $\lim_{\varepsilon\to 0} \lambda(Q_\varepsilon) = \lambda(Q)$ and $\lambda(\partial Q_\varepsilon) = \lambda(Q_\varepsilon) - \lambda(\mathring{Q_\varepsilon})$ so $\lim_{\varepsilon \to 0} \lambda(\partial Q_\varepsilon) = 0.$

Letting $\varepsilon \to 0$ in \cref{eq_c1c2},
$1=\frac{\lambda(Q)}{\lambda(Q)} \leq \lim_{r\to 0}\frac{V_Q(r)\lambda(Q)}{\lambda(B_r)} \leq 1.$
\end{proof}

\section*{Acknowledgments}
The author wishes to express his gratitude to his Ph.D. advisor Jérôme Renault, as well as Marco Scarsini for his help in improving the general presentation of the paper.

%This work was performed when the author was at the Toulouse School of Economics, Université Toulouse 1 Capitole. Declarations of interest: none.

\bibliographystyle{apalike}
\bibliography{bib_total}
\end{document}